\documentclass[12pt]{amsart}
\usepackage{amscd}
\usepackage{amssymb,amsmath}
\usepackage{hyperref}
\usepackage{mathrsfs}
\usepackage{graphicx}
\usepackage{enumitem}
\usepackage{romannum}
\usepackage{mathtools}
\usepackage[utf8]{inputenc}
\usepackage{listings}
\usepackage{lipsum,eso-pic,xcolor}
\usepackage{lineno}
\usepackage{tikz}
\usetikzlibrary{arrows,decorations.pathmorphing,backgrounds,positioning,fit}
\usetikzlibrary{positioning}
\hypersetup{
	colorlinks=true,
	linkcolor=blue,}

\usetikzlibrary{trees}
\usetikzlibrary{arrows}

\def\reg{\operatorname{reg}}
\def\deg{\operatorname{deg}}

\def\max{\operatorname{max}}

\def\v{\operatorname{v}}

\newcommand{\m}{\mathfrak m}

\newcommand{\NN}{\mathbb{N}}

\newcommand{\PP}{\mathcal{P}}
\newcommand{\im}{\mathrm{im}}

\newtheorem{lemma}{Lemma}[section]
\newtheorem{corollary}[lemma]{Corollary}
\newtheorem{theorem}[lemma]{Theorem}
\newtheorem{proposition}[lemma]{Proposition}
\newtheorem{definition}[lemma]{Definition}
\newtheorem{remark}[lemma]{Remark}

\textwidth=16cm
\textheight=21cm
\topmargin=0.5cm
\oddsidemargin=0.05cm
\evensidemargin=0.05cm
\advance\headheight1.15pt

\begin{document}
	
	\pagenumbering{arabic}
	
	\title[The v-number and Waldschmidt Constant]{The slope of v-function  and Waldschmidt Constant} 
	
	\author[Manohar Kumar]{Manohar Kumar$^a$}
	\address{Department of Mathematics, Indian Institute of Technology
		Kharagpur, West Bengal, INDIA - 721302.}
	\email{manhar349@gmail.com}
	
	\author[Ramakrishna Nanduri]{Ramakrishna Nanduri$^b$}
	\address{Department of Mathematics, Indian Institute of Technology
		Kharagpur, West Bengal, INDIA - 721302.}
  \email{nanduri@maths.iitkgp.ac.in} 

  \author[Kamalesh Saha]{Kamalesh Saha$^c$}
	\address{Department of Mathematics, Chennai Mathematical Institute, Chennai, INDIA - 603103.}
	\email{ksaha@cmi.ac.in}

	\thanks{$^a$ Supported by PMRF fellowship, India}
	\thanks{$^b$ Corresponding author and supported by SERB grant No: CRG/2021/000465, India}
 \thanks{$^c$ Supported by NBHM Postdoctoral Fellowship, India}
	\thanks{AMS Classification 2020: 13F20, 13A02, 13F55, 05E40}


\begin{abstract}
In this paper, we study the asymptotic behaviour of the v-number of a Noetherian graded filtration $\mathcal{I}= \{I_{[k]}\}_{k\geq 0}$ of a Noetherian $\mathbb{N}$-graded domain $R$. Recently, it is shown that $\v(I_{[k]})$ is periodically linear in $k$ for $k \gg 0$. We show that all these linear functions have the same slope, i.e. $\displaystyle \lim_{k \rightarrow \infty}\frac{\v(I_{[k]})}{k}$ exists, which is equal to $\displaystyle \lim_{k \rightarrow \infty}\frac{\alpha(I_{[k]})}{k}$, where $\alpha(I)$ denotes the minimum degree of a non-zero element in $I$. In particular, for any Noetherian symbolic filtration $\mathcal{I}= \{I^{(k)}\}_{k\geq 0}$ of $R$, it follows that
$\displaystyle \lim_{k \rightarrow \infty}\frac{\v(I^{(k)})}{k}=\hat{\alpha}(I)$, the Waldschmidt constant of $I$. Next, for a non-equigenerated square-free monomial ideal $I$, we prove that $\v(I^{(k)}) \leq \reg(R/I^{(k)})$ for $k\gg 0$. Also, for an ideal $I$ having the symbolic strong persistence property, we give a linear upper bound on $\v(I^{(k)})$. As an application, we derive some criteria for a square-free monomial ideal $I$ to satisfy $\v(I^{(k)})\leq \reg(R/I^{(k)})$ for all $k\geq 1$, and provide several examples in support. In addition, for any simple graph $G$, we establish that  $\v(J(G)^{(k)}) \leq \reg(R/J(G)^{(k)})$ for all $k \geq 1$, and $\v(J(G)^{(k)}) = \reg(R/J(G)^{(k)})=\alpha(J(G)^{(k)})-1$ for all $k\geq 1$ if and only if $G$ is a Cohen-Macaulay very-well covered graph, where $J(G)$ is the cover ideal of $G$. 
\end{abstract}

\maketitle 

\section{Introduction}
Let $\mathbb{N}$ denote the set of non-negative integers. Let $R=\bigoplus_{d\in\mathbb{N}}R_d$ be an $\mathbb{N}$-graded Noetherian ring and $I\subseteq R$ be a proper graded ideal. Then any $\mathfrak{p}\in\mathrm{Ass}(I)$ is of the form $I:f$ for some homogeneous $f\in R$, where $\mathrm{Ass}(I)$ denotes the set of associated primes of $I$. This fact defines the \textit{$\v$-number} of $I$, denoted by $\v(I)$, as follows:
\begin{align*}
    \v(I) := \min\{ d \geq 0 \mid \text{ there exists }\, f \in R_d \text{ and } \mathfrak{p} \in \text{Ass}(I) \text{ satisfying } I : f = \mathfrak{p} \}.
\end{align*}
For each $ \mathfrak{p} \in \text{Ass}(I) $, we can locally define the $\v$-number, called the \textit{local} $\v$-number of $I$ at $\mathfrak{p}$ and denoted by $\v_{\mathfrak{p}}(I)$, as follows:
\begin{align*}
 \v_{\mathfrak{p}}(I) := \min\{ d \geq 0 \mid \text{ there exists } \, f \in R_d \text{ satisfying } I : f = \mathfrak{p} \}.    
\end{align*}

The invariant $\v$-number was introduced in 2020 \cite{cstpv20} to investigate the asymptotic behaviour of the minimum distance function of projective Reed-Muller-type codes. The emergence of this invariant has sparked a new wave of exploration in the field of commutative algebra, resulting in several noteworthy applications. In addition, when viewed from a geometric standpoint, the degree of a truncator for a finite set of projective points, as discussed in \cite{gkr93}, is connected to the local v-number. In recent times, there has been a substantial amount of research on the $\v$-number (see \cite{ass23,as24,bm23,bms24,c23, co23,fs23, fs24, grv21, jv21, ss22, s23,ks23}). \par 

This paper focuses on studying the asymptotic behaviour of the $\v$-number for Noetherian graded filtrations of $R$. In this direction, Ficarra and Sgroi in \cite{fs23} and Conca in \cite{co23} independently proved that for a graded ideal $I$ in a polynomial ring over a field $\mathbb{K}$, the $\v$-function $\v(I^k)$ is an asymptotic linear function, i.e., $\v(I^k)$ is linear in $k$ for $k\gg 0$. In fact, Conca showed the result for a graded ideal $I$ in an $\mathbb{N}$-graded Noetherian domain. The $\v$-number of the $I$-adic filtration of modules is studied in \cite{fg24} and showed that its $\v$-number is asymptotically linear. Also, in \cite{bms24} and \cite{f23}, the authors identified many classes of graded ideals, where $\v(I^k)$ is a linear function from the very beginning. Recently, a generalization of this fact has been established for any Noetherian graded filtration $\mathcal{I}=\{I_{[k]}\}$ of an $\mathbb{N}$-graded Noetherian domain $R$ (see \cite{fs24}, \cite{as24}). In particular, it has been proved that $\v(I_{[k]})$ is eventually a quasi-linear function in $k$, i.e., $\v(I_{[k]})$ is periodically linear function in $k$ for $k\gg 0$. In this paper, our target is to show that $\displaystyle\lim_{k\rightarrow\infty} \frac{\v(I_{[k]})}{k}$ exists, i.e., the leading coefficients of all those linear functions, which represent $\v(I_{[k]})$ for large $k$, are same. Moreover, we explore the behaviour of the $\v$-function in the context of symbolic filtration of monomial ideals, establishing a relation between the $\v$-numbers of symbolic powers of certain monomial ideals and their corresponding (Castelnuovo-Mumford) regularity.



 
Symbolic powers of homogeneous ideals and their algebraic properties are one of the central research topics in Commutative Algebra. During the last decade, significant attention has been devoted to the ``ideal containment problem": given a nontrivial homogeneous ideal $I$ of a Noetherian ring, the problem is to determine all positive integer pairs $(k, r)$ such that $I^{(k)} \subseteq I^r$, where $I^{(k)}$ denotes the $k$-th symbolic power of the ideal $I$ (see \cite{ddghn18}). To gain finer insights into these containment relationships, Bocci and Harbourne \cite{bh10} introduced $\rho(I) = \sup\{\frac{k}{r} \mid I^{(k)} \nsubseteq I^r \}$, known as the resurgence of $I$. In general, computing $\rho(I)$ is quite difficult. To predict $\rho(I)$, people investigate the Waldschmidt constant of $I$. Given any nonzero homogeneous ideal $I$ of $R$, the {\em Waldschmidt constant} of $I$ is $\hat{\alpha}(I): = \displaystyle\lim_{k \to \infty} \frac{\alpha(I^{(k)})}{k}$, where $\alpha(I) = \min\{d \mid I_d \neq 0\}$. Waldschmidt constants manifest in diverse forms across multiple mathematical disciplines. Initially, they garnered attention in complex analysis concerning the bounding of growth rates for holomorphic functions; refer to \cite{w77}. Bocci and Harbourne’s result $\frac{\alpha(I)}{\hat{\alpha}(I)} \leq \rho(I)$ (see \cite[Theorem 1.2]{bh10}) has drawn interest towards the computation of $\hat{\alpha}(I)$ in the realm of commutative algebra.
\par 

In this article, we pay special attention to the symbolic filtration of graded ideal $I$ in $R$ compared to other filtrations. When the symbolic Rees algebra $\mathcal{R}_{s}(I)$ is Noetherian, our result shows that $\displaystyle \lim_{k \rightarrow \infty}\frac{\v(I^{(k)})}{k}=\hat{\alpha}(I)$. Specifically, we analyze the $\v$-number of symbolic powers of square-free monomial ideals. On the other hand, the regularity of symbolic powers has been a topic of interest for numerous researchers (see \cite{f18}, \cite{f21}, \cite{n22}). By comparing the $\v$-number of symbolic powers of certain monomial ideals with their corresponding regularity, we partially answer the question raised in \cite[Question 5.1]{fs24}. Let's denote the cover ideal of a simple graph $G$ as $J(G)$, with the corresponding ring $R$. For all $k\geq 1$, we show that $\v(J(G)^{(k)})$ serves as a lower bound for $\reg(R/J(G)^{(k)})$. Additionally, we provide the necessary and sufficient conditions for equality. The paper is structured as follows.

\par 

In Section \ref{secpreli}, we recall some definitions, notions, and results associated with our work. In Section \ref{limit-existence}, we first prove in Lemma \ref{limit-exists} that if $\mathcal{I}=\{I_{[k]}\}$ is a Noetherian graded filtration of a Noetherian $\mathbb{N}$-graded domain $R$, then $\displaystyle \lim_{k \rightarrow \infty}\frac{\alpha(I_{[k]})}{k}$ exists. Using Lemma \ref{limit-exists} and other techniques, we establish our main result:\medskip

\noindent \textbf{Theorem \ref{limit-vnumber}.} \textit{Let $\mathcal{I}=\{I_{[k]}\}_{k \geq 0}$ be a Noetherian graded filtration of a Noetherian $\mathbb{N}$-graded domain $R$. Then, $\displaystyle \lim_{k \rightarrow \infty}\frac{\v(I_{[k]})}{k}$ exists and $\displaystyle\lim_{k \rightarrow \infty}\frac{\v(I_{[k]})}{k} =\lim_{k \rightarrow \infty}\frac{\alpha(I_{[k]})}{k}.$}
\medskip
 
 \noindent As a consequence of Theorem \ref{limit-vnumber}, we get $\displaystyle \lim_{k \rightarrow \infty}\frac{\v(I^{(k)})}{k}=\hat{\alpha}(I)$, when $\mathcal{R}_{s}(I)$ is Noetherian (Corollary \ref{waldscmidt-constant}). In section \ref{square-free-monomial-ideal}, we first give a sufficient condition in Proposition \ref{alpha-delta} for a monomial ideal $I$ to satisfy $\v(I^{(k)})\leq \reg (R/I^{(k)})$ for all $k \gg 0$. As an application of Proposition \ref{alpha-delta}, we show that if $I$ is a non-equigenerated square-free monomial ideal, then $\v(I^{(k)}) \leq \reg(R/I^{(k)})$ for all $k \gg 0$ (Corollary \ref{nonequi}). Next, for an ideal $I$ having symbolic strong persistence property, we establish an upper bound on $\v(I^{(k)})$ for all $k\geq 1$, which is a linear function in $k$ (see Theorem \ref{thm_vsym_ub}). An immediate consequence of this result gives a linear upper bound on the $\v$-number of symbolic powers of square-free monomial ideals, as stated in Corollary \ref{corsqfreeub}. In Proposition \ref{prop_poly}, we show that if $I$ is a square-free polymatroidal ideal, then $\v(I^{(k)})\leq \reg(R/I^{(k)})$ for all $k\geq 1$, and also discuss when the equality will hold. Also, we give some criteria in Proposition \ref{propvimreg} for a square-free monomial ideal $I$ to satisfy $\v(I^{(k)})\leq \reg(R/I^{(k)})$ and $\v(I^{k})\leq \reg(R/I^{k})$ for all $k\geq 1$, and in Remark \ref{bipartite-chordal}, we mention some classes satisfying those criteria. Using the criteria given in Section \ref{square-free-monomial-ideal}, we cannot argue the relation between $\v$-number and regularity for cover ideals of graphs. Thus, in Section \ref{cover-ideal}, using different techniques, we prove that $\v(J(G)^{(k)}) \leq \reg(R/J(G)^{(k)})$ for all $k\geq 1$ (Theorem \ref{v-numver-less-regularity}), where $G$ is any simple graph. Moreover, we show in Theorem \ref{v-number-equal-regularity} that for a simple graph $G$, $\v(J(G)^{(k)}) = \reg(R/J(G)^{(k)})=\alpha(J(G)^{(k)})-1$ for all $k\geq 1$ if and only if $G$ is Cohen-Macaulay very-well covered. At the end, we discuss some possible values of the slope of the $\v$-function for symbolic powers of cover ideals of graphs.


\section{Preliminaries}\label{secpreli}
In this section, we recall some definitions and results which will be used throughout this paper.  
 \begin{definition} {\rm For a graded ideal $I$ in $R$, the\textit{ Castelnuovo-Mumford regularity} (or simply \textit{regularity}) of $R/I$, denoted by $\reg(R/I)$, is defined as 
 \begin{eqnarray*}
 \reg(R/I) &=&  \max \{j - i \mid \beta_{i,j}(R/I) \neq 0\} \\
         &=& \max\{j+i \mid H_{\m}^i(R/I)_j \neq 0\},   
 \end{eqnarray*}
  where $\beta_{i,j}(R/I)$ is the $(i,j)^{th}$ graded Betti number of $R/I$ and $H_{\m}^i(R/I)_j$ denotes the $j^{th}$ graded component of the $i^{t h}$ local cohomology module $H_{\m}^i(R/I)$.
  } 
\end{definition}

Let $R$ be a $\NN$-graded Noetherian domain. A \textit{graded filtration} of $R$ is a family $\mathcal{I} = \{I_{[k]}\}_{k\geq 0}$ of graded ideals of $R$ satisfying: (i) $I_{[0]} = R$; (ii) $I_{[k+1]} \subseteq I_{[k]}$ for all $k \geq 0$; (iii) $I_{[k]}I_{[r]} \subseteq I_{[k+r]}$ for all $k,r \geq 0$. We say that $\mathcal{I}$ is Noetherian if the Rees algebra $R(\mathcal{I})= \bigoplus_{k\geq 0} I_{[k]}t^k$ is a Noetherian ring. The ordinary powers of a graded ideal form a filtration of $R$. Also, the symbolic powers of a graded ideal give a filtration of $R$. 
In literature, the $k$-th symbolic power of an ideal $I$ of a Noetherian ring is defined in two ways:

 \begin{definition}[Symbolic power]\label{def-symbolic}
 {\rm Let $R$ be a Noetherian ring and $I\subset R$ be an ideal. Then, the  $k$-th symbolic power of $I$ is defined as
 \begin{enumerate}
     \item $ \displaystyle I^{(k)}=\bigcap_{P\in \mathrm{Ass}(R/I)}(I^k R_P \cap R).$
     \item $\displaystyle I^{(k)}=\bigcap_{P\in \mathrm{Min}(I)}(I^k R_P \cap R),$  where $\mathrm{Min}(I)$ is the set of all minimal prime of $I$.
 \end{enumerate}}
 \end{definition}
Throughout this paper, we use either definition unless otherwise specified. It is important to note that all the results derived in this article hold true for both definitions of symbolic powers. \par




\begin{definition}{\rm
Let $m=x_1^{a_1}\cdots x_n^{a_n}$ be a monomial in $R=\mathbb{K}[x_1,\ldots,x_n]$. Then the {\it polarization} of $m$ is defined to be the square-free monomial
\begin{equation*}
    \PP(m)=x_{11}x_{12} \cdots x_{1a_1} x_{21} \cdots x_{2a_2} \cdots x_{n a_n}
\end{equation*}
in the polynomial ring $\mathbb{K}[x_{i j} : 1 \leq j \leq a_i,1 \leq i \leq n ].$ If $I \subset R $ is a monomial ideal with the minimal generating set $\{m_1, \ldots, m_u\}$ and $\displaystyle m_i=\prod_{j=1}^{n}x_j^{a_{ij}}$, where each $a_{ij} \geq 0$ for $i=1, \ldots, u.$ Then the {\it polarization} of $I$, denoted by $ I^{\PP}$, is defined as 
\begin{equation*}
    I^{\PP}=(\PP(m_1), \ldots, \PP(m_u)),
\end{equation*}
which is a square-free monomial ideal in the polynomial ring $R^{\PP}=\mathbb{K}[x_{j1},x_{j2}, \ldots, x_{ja_j} \mid j=1, \ldots, n ]$, where $a_j=\max\{a_{i j} \mid i=1, \ldots,u \}$ for any $1 \leq j \leq n.$
}
\end{definition}

\begin{lemma}\cite[Corollary 3.5.]{ss22} \label{embedded-primes}
 For a monomial ideal $I$, we have $\v(I^{\PP}) = \v(I)$.  
\end{lemma}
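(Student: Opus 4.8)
The claim says that polarization preserves the $\v$-number, which is what lets one reduce questions about $\v$-numbers of monomial ideals to the square-free case. The plan is to argue entirely with monomials, using the combinatorial description of colon ideals. Write $I=(m_1,\dots,m_u)$, $a_i=\max_\ell\deg_{x_i}(m_\ell)$, so that $I^{\PP}$ has generators $\PP(m_\ell)$ in $R^{\PP}=\mathbb K[x_{ij}:1\le j\le a_i]$. The fact to set up first is: for a monomial prime $\mathfrak p_C=(x_i:i\in C)$ and a monomial $f$, one has $I:f=\mathfrak p_C$ if and only if
\begin{enumerate}[label=\textup{(\roman*)}]
\item for every $\ell$ there is some $i\in C$ with $\deg_{x_i}(m_\ell)>\deg_{x_i}(f)$, and
\item for every $i\in C$ there is some $\ell$ with $\deg_{x_i}(m_\ell)=\deg_{x_i}(f)+1$ and $\deg_{x_k}(m_\ell)\le\deg_{x_k}(f)$ for all $k\ne i$;
\end{enumerate}
this is immediate from $I:f=\bigl(m_\ell/\gcd(m_\ell,f):\ell\bigr)$. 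Since associated primes of a monomial ideal are monomial primes, and it is standard that for monomial ideals the $\v$-number is attained by a monomial, one gets $\v(I)=\min\{\deg f:f\text{ a monomial},\ I:f\in\operatorname{Ass}(I)\}$, and likewise for $I^{\PP}$; and since $I^{\PP}$ is square-free, the minimizing monomial for $I^{\PP}$ may be taken square-free (if $x_{ij}^2\mid f$, then $f/x_{ij}$ still satisfies the square-free analogue of (i)--(ii)).

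For $\v(I^{\PP})\le\v(I)$ I would take a monomial $f$ of degree $\v(I)$ with $I:f=\mathfrak p_C\in\operatorname{Ass}(I)$. Condition (ii) gives $\deg_{x_i}(f)\le a_i-1$ for $i\in C$, and minimality of $\deg f$ forces $\deg_{x_i}(f)\le a_i$ also for $i\notin C$ (otherwise $f/x_i$ is again a witness, by rechecking (i)--(ii)); hence $\PP(f)\in R^{\PP}$ is well defined of degree $\deg f$. Setting $\mathfrak q:=(x_{i,\,\deg_{x_i}(f)+1}:i\in C)$, a direct verification of the square-free form of (i)--(ii) for $I^{\PP}$ shows $I^{\PP}:\PP(f)=\mathfrak q$; this is a monomial prime realised as a colon ideal, hence associated to $I^{\PP}$, and therefore $\v(I^{\PP})\le\deg\PP(f)=\deg f=\v(I)$.

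For the reverse inequality $\v(I)\le\v(I^{\PP})$ I would start from a square-free monomial $g$ of degree $\v(I^{\PP})$ with $\mathfrak q:=I^{\PP}:g\in\operatorname{Ass}(I^{\PP})$ and push it through the $\mathbb K$-algebra surjection $\psi\colon R^{\PP}\to R$, $x_{ij}\mapsto x_i$. Then $\psi(I^{\PP})\subseteq I$, $\deg\psi(g)=\deg g$, $\psi(\mathfrak q)R\subseteq I:\psi(g)$, and every associated prime of $I:\psi(g)$ lies in $\operatorname{Ass}(I)$ (because $R/(I:\psi(g))\cong\psi(g)\cdot(R/I)$ is a submodule of $R/I$). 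So it suffices to arrange that $I:\psi(g)$ is itself a monomial prime: then it lies in $\operatorname{Ass}(I)$, it is witnessed by $\psi(g)$, and $\v(I)\le\deg\psi(g)=\v(I^{\PP})$. One cannot use an arbitrary $g$ here --- in general $I:\psi(g)$ is strictly larger than $\psi(\mathfrak q)R$ --- so I would choose $g$ of minimal degree $\v(I^{\PP})$ and, among those, minimizing $\sum_{x_{ij}\mid g}j$ (``as low as possible''), and then show that for such $g$ each $m_\ell/\gcd(m_\ell,\psi(g))$ is a single variable, so that $I:\psi(g)$ is prime. The mechanism is an exchange argument: if some $m_\ell/\gcd(m_\ell,\psi(g))$ fails to be a variable, or if $\psi(g)\in I$, one locates a ``gap'' $x_{i,j_0}\nmid g$ with $x_{i,j_1}\mid g$ for some $j_1>j_0$, replaces $g$ by $g\cdot x_{i,j_0}/x_{i,j_1}$, and argues this remains a minimal witness, contradicting the minimality of $\sum_{x_{ij}\mid g}j$.

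The delicate point, and the step I expect to be the real obstacle, is precisely this last exchange argument. Minimal square-free witnesses of $I^{\PP}$ need not have ``downward-closed support'' in each block of polarization variables (the presence of a gap can be exactly what keeps $g$ out of $I^{\PP}$), and when the support of $g$ interleaves with the prime $\mathfrak q$, the naive exchange can move $g$ into $I^{\PP}$ or make the colon fail to be prime; those cases require moving the offending generator of $\mathfrak q$ along with $g$ and re-checking, through (i)--(ii), that the new colon is still a prime associated to $I^{\PP}$. Equivalently, what must be pinned down is which associated primes of $I^{\PP}$ collapse onto which associated primes of $I$, together with a matching of the corresponding local $\v$-numbers; Steps 1 and 2 are routine by comparison, and once this bookkeeping is carried out the two inequalities give $\v(I^{\PP})=\v(I)$.
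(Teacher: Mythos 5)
Your first inequality, $\v(I^{\PP})\le\v(I)$, is complete and correct: the colon criterion (i)--(ii) is right, the reduction to a monomial witness $f$ with $\deg_{x_i}(f)\le a_i-1$ for $i\in C$ (and $\le a_i$ otherwise, by the minimality argument you give) is sound, and the verification that $I^{\PP}:\PP(f)=(x_{i,\deg_{x_i}(f)+1}\mid i\in C)$ is a routine check against the square-free form of (i)--(ii). Note that the paper itself offers no proof of this lemma; it quotes \cite[Corollary 3.5]{ss22}, whose argument runs through a prime-by-prime comparison of $\mathrm{Ass}(I)$ with $\mathrm{Ass}(I^{\PP})$ and of the corresponding local $\v$-numbers, and your first half is essentially the ``easy'' direction of that comparison.

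The reverse inequality $\v(I)\le\v(I^{\PP})$ is where the content of the lemma sits, and your proposal does not establish it. Everything is reduced to the claim that a minimal-degree square-free witness $g$ of $I^{\PP}$ can be chosen so that $I:\psi(g)$ is prime, and you propose to obtain this by exchanges $g\mapsto g\,x_{i,j_0}/x_{i,j_1}$ ($j_0<j_1$) making the support of $g$ downward-closed in each block. But, as you yourself observe, the exchange can fail: if $x_{i,j_0}$ is the generator of $\mathfrak q$ in block $i$, then $g\,x_{i,j_0}\in I^{\PP}$, so after the exchange the colon acquires $x_{i,j_1}$ in place of $x_{i,j_0}$ (the prime ``moves''), and one must re-verify both that the new colon is still generated by variables and that it is still associated; you explicitly defer this verification (``once this bookkeeping is carried out\ldots''), so the second inequality is asserted rather than proved. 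Concretely, what is missing is the structural statement that every $\mathfrak q\in\mathrm{Ass}(I^{\PP})$ has the form $(x_{i,b_i}\mid i\in C)$ with exactly one polarization variable per block and with $(x_i\mid i\in C)\in\mathrm{Ass}(I)$, together with the matching of local $\v$-numbers $\v_{\mathfrak q}(I^{\PP})\ge\v_{(x_i\mid i\in C)}(I)$; this is precisely the content of \cite[Section 3]{ss22} that the lemma is citing. Until that step is supplied, only $\v(I^{\PP})\le\v(I)$ is proved.
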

\begin{definition}
{\rm A {\it hypergraph} $\mathcal{H}$ is a pair of two sets $(V(\mathcal{H}), E(\mathcal{H}))$, where $V(\mathcal{H})$ is called the vertex set of $\mathcal{H}$ and $E(\mathcal{H})$ is a collection of subsets of $V(\mathcal{H})$, called the edge set of $\mathcal{H}$. A \textit{simple} hypergraph is a hypergraph such that no two elements (called edges) of $E(\mathcal{H})$ contain each other. A simple graph is an example of a simple hypergraph, whose edges are of cardinality two.}
\end{definition}
Note that the term hypergraph in this manuscript always denotes a simple hypergraph.
Let $\mathcal{H}$ be a hypergraph on the vertex set $V(\mathcal{H}) = \{x_1, \ldots, x_n\}$. Then, for $A \subseteq V(\mathcal{H})$, we consider $X_A := \prod_{x_i \in A} x_i$ as a square-free monomial in the polynomial ring $R = \mathbb{K}[x_1, \ldots, x_n]$ over a field $\mathbb{K}$. The edge ideal of the hypergraph $\mathcal{H}$, denoted by $I(\mathcal{H})$, is the ideal in $R$ defined by
$$I(\mathcal{H}) = ( X_e \mid e \in E(\mathcal{H}) ),$$
where $X_e$ represents the square-free monomial corresponding to the edge $e$.
The {\em induced subhypergraph} $\mathcal{H'}$ of $\mathcal{H}$ on a subset $W$ of the vertex set is the hypergraph over vertex set $W$ with edge set consisting of all edges of $\mathcal{H}$ that are contained in $W$.
A hypergraph is a $d$-uniform hypergraph if every edge contains exactly $d$ vertices. We call a collection $\{E_1, \ldots, E_l\}$ of edges in $\mathcal{H}$ an induced matching if they form a matching in $\mathcal{H}$ (i.e., they are pairwise disjoint), and they are exactly the edges of the induced subhypergraph of $\mathcal{H}$ over the vertices contained in $\displaystyle \bigcup_{i=1}^l E_i$. The {\it induced matching number} of $\mathcal{H}$, denoted by $\im(\mathcal{H})$, is the maximum size of an induced matching in $\mathcal{H}$.

Now, we recall some notions from graph theory. Let $G$ be a finite simple graph. A subset $W$ of $V(G)$ is called an {\it independent subset} of $G$ if there are no edges among the vertices of $W$. An independent subset $W$ of $G$ is a {\it maximal independent subset} if $W \cup \{x\}$ is not an independent subset of $G$ for every vertex $x \in V(G) \setminus W$. A graph $G$ without isolated vertices is {\it very well-covered} if $n$ is an even number and every maximal independent subset of $G$ has cardinality $\frac{n}{2}$. A set of vertices $C$ of $G$ is called a {\it vertex cover} of $G$ if $C\cap e\neq \emptyset$ for all $e\in E(G)$. A {\it minimal vertex cover} is a vertex cover which is minimal with respect to set inclusion.
The {\it cover ideal} of $G$, denoted by $J(G)$, is defined as,
\begin{align*}
J(G) := \bigg( \prod_{x_i \in C} x_i \mid C \text{ is a minimal vertex cover of } G \bigg )= \bigcap_{\{x_i,x_j\} \in E(G)}(x_i,x_j).
\end{align*}





\section{Asymptotic $\v$-number of Noetherian graded filtration}\label{limit-existence}
In this section, we prove that $\displaystyle\lim_{k \rightarrow \infty}\frac{\v(I_{[k]})}{k}$ exists for any Noetherian graded filtration of ideals over a Noetherian graded domain $R$. 
\begin{lemma} \label{there-exist}
 Let $\mathcal{I}=\{I_{[k]}\}_{k \geq 0}$ be a Noetherian graded filtration of ideals. Then, there exists a positive integer $r$ such that $I_{[rk]}=(I_{[r]})^k$ for all $k \geq 1$.   
\end{lemma}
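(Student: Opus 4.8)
The plan is to work inside the Rees algebra $\mathcal{A} := R(\mathcal{I}) = \bigoplus_{k \geq 0} I_{[k]}t^{k}$, which is Noetherian by hypothesis, and to exhibit a Veronese subalgebra of $\mathcal{A}$ that is generated in degree one over $\mathcal{A}_{0}=R$. Recall the general fact that a Noetherian $\mathbb{N}$-graded ring $\mathcal{A}$ admits an integer $r\geq 1$ for which the $r$-th Veronese $\mathcal{A}^{(r)} := \bigoplus_{k\geq 0}\mathcal{A}_{rk}$ is generated as an $\mathcal{A}_{0}$-algebra by $\mathcal{A}_{r}$; equivalently $\mathcal{A}_{rk} = (\mathcal{A}_{r})^{k}$ for all $k\geq 0$. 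Granting this, the lemma is immediate: the degree-$rk$ component of $R(\mathcal{I})$ is $I_{[rk]}t^{rk}$ while $(\mathcal{A}_{r})^{k} = \big(I_{[r]}t^{r}\big)^{k} = (I_{[r]})^{k}t^{rk}$, so comparing components yields $I_{[rk]} = (I_{[r]})^{k}$ for every $k\geq 1$.

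What remains is to justify the Veronese fact for our $\mathcal{A}$, and this is the step I would write out. Since $R(\mathcal{I})$ is a finitely generated $R$-algebra, fix $\ell\geq 1$ so that $R(\mathcal{I})$ is generated over $R$ by $t$-homogeneous elements of $t$-degree at most $\ell$; concretely this says
\[
I_{[k]} \;=\; \sum_{\substack{c_{1},\dots,c_{\ell}\geq 0\\ c_{1}+2c_{2}+\cdots+\ell c_{\ell}=k}} I_{[1]}^{c_{1}}I_{[2]}^{c_{2}}\cdots I_{[\ell]}^{c_{\ell}}\qquad\text{for all }k\geq 0.
\]
Now take $r:=\ell!$, so $i\mid r$ for every $1\leq i\leq \ell$. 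The inclusion $(I_{[r]})^{k}\subseteq I_{[rk]}$ holds for all $k$ by iterating the filtration axiom $I_{[a]}I_{[b]}\subseteq I_{[a+b]}$, so only $I_{[rk]}\subseteq (I_{[r]})^{k}$ needs proof. By the displayed formula it is enough to show that whenever $c_{1},\dots,c_{\ell}\geq 0$ satisfy $\sum_{i=1}^{\ell}ic_{i}=rk$, the product $\prod_{i=1}^{\ell}I_{[i]}^{c_{i}}$ lies in $(I_{[r]})^{k}$. For this I would partition the multiset consisting of $c_{i}$ copies of $i$ into $k$ sub-multisets, each of total weight exactly $r$: if the $g$-th sub-multiset uses $c_{i,g}$ copies of $i$, so $\sum_{i}ic_{i,g}=r$ and $\sum_{g=1}^{k}c_{i,g}=c_{i}$, then the filtration axioms give $\prod_{i}I_{[i]}^{c_{i,g}}\subseteq I_{[r]}$, and hence $\prod_{i}I_{[i]}^{c_{i}}=\prod_{g=1}^{k}\big(\prod_{i}I_{[i]}^{c_{i,g}}\big)\subseteq (I_{[r]})^{k}$.

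The main obstacle is precisely this partition — equivalently, the assertion that $R(\mathcal{I})^{(r)}$ is standard graded. One can prove it by induction on $k$: it suffices to extract a single sub-multiset of weight exactly $r$, after which the remaining multiset has weight $r(k-1)$, again a multiple of $r$, and the inductive hypothesis applies. Such a block can be peeled off by working down from the largest index present, using the divisibility $i\mid r$ together with $\sum_{i}ic_{i}=rk\geq r$ to ensure one can always reach weight exactly $r$. Alternatively, and more economically, one may simply invoke the standard fact that every Noetherian $\mathbb{N}$-graded ring has a standard-graded Veronese subalgebra, and then read off $I_{[rk]}=(I_{[r]})^{k}$ from the identification of $t$-graded components as above.
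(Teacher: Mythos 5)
Your overall strategy is sound and is essentially the route the paper itself takes: the paper disposes of this lemma by citing Ratliff, and Ratliff's result is exactly the statement that a Noetherian $\mathbb{N}$-graded ring has a standard-graded Veronese subalgebra. Your reduction is handled correctly — the identification of the degree-$rk$ component of $R(\mathcal{I})$ with $I_{[rk]}t^{rk}$, the easy inclusion $(I_{[r]})^{k}\subseteq I_{[rk]}$ from the filtration axioms, and the reformulation of finite generation as $I_{[k]}=\sum_{\sum_i ic_i=k}\prod_i I_{[i]}^{c_i}$ are all fine, and your ``more economical'' alternative of simply invoking the standard Veronese fact (EGA II 2.1.6 / Bourbaki) is a legitimate complete proof.

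The self-contained combinatorial argument, however, has a genuine gap at its crux. The peeling procedure you describe — work down from the largest index present, taking as many copies of $i$ as fit into the remaining capacity — does \emph{not} in general reach weight exactly $r$. Take $\ell=4$, $r=4!=24$, and the tuple $(c_1,c_2,c_3,c_4)=(0,2,8,5)$, so that $\sum_i ic_i=4+24+20=48=2r$: the greedy block absorbs all five $4$'s (weight $20$), then one $3$ (weight $23$), and then stalls with a gap of $1$ that no remaining $2$ can fill and no $1$ is available. A valid decomposition does exist here ($I_{[3]}^{8}\subseteq I_{[24]}$ and $I_{[2]}^{2}I_{[4]}^{5}\subseteq I_{[24]}$), and indeed the underlying partition statement (a multiset of divisors of $r$ with total $rk$ splits into $k$ blocks of sum $r$) appears to be true, but it is a nontrivial result that your sketch does not establish; as stated, the ``one can always reach weight exactly $r$'' step fails. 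The standard repair is to enlarge $r$: set $q=\mathrm{lcm}(1,\dots,\ell)$ and $r=\ell q$. Then for any tuple with $\sum_i ic_i=rk$ and $k\geq 2$, after peeling $s<\ell$ chunks the remaining weight is $(\ell k-s)q\geq \ell q$, so by pigeonhole some index satisfies $ic_i\geq q$, i.e.\ $c_i\geq q/i$, and one may peel off $q/i$ copies of $i$ — a chunk of weight exactly $q$ lying in $I_{[q]}$. Doing this $\ell$ times produces a factor in $(I_{[q]})^{\ell}\subseteq I_{[\ell q]}=I_{[r]}$, leaving weight $r(k-1)$, and induction on $k$ finishes the argument. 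With that substitution (or with the citation you already propose) your proof is complete.
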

\begin{proof}
The result follows from \cite[Theorem 2.7.]{r79} and \cite[Remark 2.4.4.]{r79}.
\end{proof}

\begin{proposition} \label{colon}
    Let $I$ be a graded ideal in a Noetherian $\mathbb{N}$-graded domain $R$. If $f \in R$ is a homogeneous element such that $f\notin I$, then $\v(I)\leq \v(I:f)+\deg(f)$.
\end{proposition}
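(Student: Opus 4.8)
The plan is to build an associated prime of $I$ that witnesses the $\v$-number bound, starting from one that witnesses $\v(I:f)$. First I would pick a homogeneous element $g \in R_{\v(I:f)}$ and a prime $\mathfrak{p} \in \mathrm{Ass}(I:f)$ with $(I:f):g = \mathfrak{p}$; equivalently, $I:(fg) = \mathfrak{p}$. Since $R$ is a domain and $f \notin I$, the product $fg$ is a nonzero homogeneous element of degree $\deg(f) + \v(I:f)$, and the key structural observation is that $\mathfrak{p} = I:(fg)$ is itself an associated prime of $I$: indeed $I:(fg)$ is a prime ideal, and any colon of $I$ by a single element that happens to be prime is automatically in $\mathrm{Ass}(I)$ (for monomial or general Noetherian situations, a prime of the form $I:h$ lies in $\mathrm{Ass}(R/I)$ — one should make sure to invoke the precise fact that $\mathfrak{p}\in\mathrm{Ass}(R/I)$ iff $\mathfrak{p}=I:h$ for some $h$, which is exactly the characterization recalled in the introduction of the paper).

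Granting that, the element $fg$ realizes the defining condition for $\v(I)$ with the associated prime $\mathfrak{p}$, so by definition of the $\v$-number we get
\[
\v(I) \leq \deg(fg) = \deg(f) + \v(I:f),
\]
which is the claimed inequality. So the argument is essentially: transport the witness $(g,\mathfrak{p})$ for $I:f$ to the witness $(fg,\mathfrak{p})$ for $I$, using multiplicativity of the colon operation $(I:f):g = I:(fg)$.

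The one point that needs care — and the main (minor) obstacle — is verifying that $\mathfrak{p} = I:(fg)$ genuinely belongs to $\mathrm{Ass}(I)$, not merely to $\mathrm{Ass}(I:f)$. This is where the hypotheses $R$ a domain and $f \notin I$ enter: they guarantee $fg \neq 0$ in $R$ and, more importantly, that $fg \notin I$ (otherwise $g \in I:f$, contradicting $g \in R_{\v(I:f)}$ giving $(I:f):g$ proper). Thus $I:(fg)$ is a proper ideal, it is prime (being equal to $\mathfrak p$), and a proper prime colon ideal of $I$ is an associated prime of $R/I$. Once this membership is secured, the degree count is immediate and the proof concludes. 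One should also note $g$ may be taken of positive degree or degree zero without issue, and if $\v(I:f)=0$ the statement degenerates harmlessly.
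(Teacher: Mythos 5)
Your proof is correct and is essentially the argument the paper relies on (the paper simply defers to the proof of Proposition 4.3 in Ficarra--Sgroi, which is exactly this colon-transport: pick $g$ with $(I:f):g=\mathfrak{p}$ realizing $\v(I:f)$, observe $I:(fg)=\mathfrak{p}$ with $fg$ homogeneous and $fg\notin I$, hence $\mathfrak{p}\in\mathrm{Ass}(I)$ and $\v(I)\le\deg(f)+\v(I:f)$). All the points you flag as needing care (properness of $I:(fg)$, and that a proper prime of the form $I:h$ is automatically an associated prime) are handled correctly.
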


\begin{proof}
    The proof is similar to the proof of \cite[Proposition 4.3]{fs23}, where the result had been derived when $R$ is a polynomial ring.
\end{proof}

\begin{lemma}\label{limit-exists}
Let $\mathcal{I}=\{I_{[k]}\}_{k \geq 0}$ be a Noetherian graded filtration of a Noetherian graded domain $R$. Then, $\displaystyle \lim_{k \rightarrow \infty}\frac{\alpha(I_{[k]})}{k}$ exists. In particular, 
$\displaystyle \lim_{k \rightarrow \infty}\frac{\alpha(I_{[k]})}{k} = \frac{\alpha(I_{[r]})}{r}$ for some positive integer $r$.
\end{lemma}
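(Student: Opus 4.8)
The statement to prove is Lemma \ref{limit-exists}: for a Noetherian graded filtration $\mathcal{I}=\{I_{[k]}\}$ of a Noetherian graded domain $R$, the limit $\lim_{k\to\infty}\alpha(I_{[k]})/k$ exists, and in fact equals $\alpha(I_{[r]})/r$ for a suitable $r$. The guiding idea is that $\alpha$ is \emph{subadditive} along the filtration: the filtration condition $I_{[k]}I_{[m]}\subseteq I_{[k+m]}$ forces $\alpha(I_{[k+m]})\le \alpha(I_{[k]})+\alpha(I_{[m]})$, since the product of a minimal-degree element of $I_{[k]}$ and one of $I_{[m]}$ is a nonzero element (here the domain hypothesis is essential) of $I_{[k+m]}$ of degree $\alpha(I_{[k]})+\alpha(I_{[m]})$. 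By Fekete's subadditivity lemma, $\lim_{k\to\infty}\alpha(I_{[k]})/k$ then exists and equals $\inf_{k\ge 1}\alpha(I_{[k]})/k$. So the first half is essentially immediate.

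\textbf{Main steps.} First I would record the subadditivity inequality $\alpha(I_{[k+m]})\le\alpha(I_{[k]})+\alpha(I_{[m]})$ for all $k,m\ge 1$, spelling out that nonvanishing of the product uses that $R$ is a domain. Second, invoke Fekete's lemma to conclude the limit $L:=\lim_{k\to\infty}\alpha(I_{[k]})/k$ exists and equals $\inf_{k\ge 1}\alpha(I_{[k]})/k$. Third — for the ``in particular'' clause — I would use Lemma \ref{there-exist}: there is a positive integer $r$ with $I_{[rk]}=(I_{[r]})^k$ for all $k\ge 1$. In a domain, $\alpha$ of an ordinary power is exactly multiplicative, $\alpha\big((I_{[r]})^k\big)=k\,\alpha(I_{[r]})$, so $\alpha(I_{[rk]})/(rk)=\alpha(I_{[r]})/r$ for every $k$. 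Taking $k\to\infty$ along this subsequence of the convergent sequence $\alpha(I_{[k]})/k$ shows $L=\alpha(I_{[r]})/r$, as claimed.

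\textbf{Anticipated obstacle.} There is no deep obstacle here; the only points requiring care are (a) handling the degenerate case where some $I_{[k]}=0$ or $I_{[k]}=R$ (one should note that if $I_{[1]}=R$ the filtration is trivial and all $\alpha(I_{[k]})=0$, and otherwise $I_{[k]}\neq R$ with $I_{[k]}\neq 0$ for the relevant range so $\alpha(I_{[k]})$ is a well-defined positive integer), and (b) making sure the subsequence argument is stated correctly: since the full sequence $\alpha(I_{[k]})/k$ converges to $L$, any subsequence — in particular $\alpha(I_{[rk]})/(rk)$ — also converges to $L$, and since this subsequence is the constant $\alpha(I_{[r]})/r$, we get $L=\alpha(I_{[r]})/r$. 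The slight subtlety worth a remark is that $r$ here is the one furnished by Lemma \ref{there-exist}, so the same $r$ will be reused in the proof of Theorem \ref{limit-vnumber}.
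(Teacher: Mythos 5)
Your proposal is correct. It proves the same two facts the paper does, but packages the existence part differently: you get existence of $\lim_k \alpha(I_{[k]})/k$ purely from the subadditivity $\alpha(I_{[k+m]})\le\alpha(I_{[k]})+\alpha(I_{[m]})$ (which, as you note, needs the domain hypothesis so that the product of two minimal-degree elements is nonzero) together with Fekete's lemma, and only afterwards invoke Lemma \ref{there-exist} to identify the limit via the constant subsequence $\alpha(I_{[rk]})/(rk)=\alpha(I_{[r]})/r$. The paper instead leans on Lemma \ref{there-exist} from the outset: it fixes $r$ with $I_{[rn]}=(I_{[r]})^n$, sandwiches $\alpha(I_{[rn]})\le\alpha(I_{[rn+j]})\le\alpha(I_{[rn]})+\alpha(I_{[j]})$ for each fixed residue $j$, computes the limit along each of the $r$ residue classes, and then observes these classes partition the tail of the sequence. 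The two arguments use the same ingredients (subadditivity from the filtration axiom, multiplicativity of $\alpha$ on ordinary powers in a domain, and the Ratliff-type periodicity); your route is shorter and yields the extra information that the limit equals $\inf_{k\ge 1}\alpha(I_{[k]})/k$, while the paper's hands-on sandwich is self-contained and sets up the residue-class bookkeeping that is reused verbatim in the proof of Theorem \ref{limit-vnumber}. Your remarks on the degenerate cases and on reusing the same $r$ later are sensible and do not affect correctness.
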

\begin{proof}
By Lemma \ref{there-exist}, there exists $r$ such that $I_{[rn]}=(I_{[r]})^n$. Then, we have  $\alpha(I_{[rn]})=n\alpha(I_{[r]})$.
Note that $\alpha(I_{[rn]}) \leq \alpha(I_{[rn+j]})$ as $I_{[rn]} \supseteq I_{[rn+j]}$ for all $j \geq 0$. Again, $I_{[rn]}I_{[j]}\subseteq I_{[rn+j]}$ implies $\alpha(I_{[rn+j]}) \leq \alpha(I_{[rn]})+\alpha(I_{[j]})$. Thus, for any integer $k=rn+j$ with $j\geq 0$, we have 
$$\alpha(I_{[rn]}) \leq \alpha(I_{[rn+j]}) \leq \alpha(I_{[rn]})+\alpha(I_{[j]}).$$ 
This implies that for a fix $j\geq 0$,
$$\lim_{n \rightarrow \infty }\frac{n\alpha(I_{[r]})}{rn+j} \leq \lim_{n \rightarrow \infty }\frac{\alpha(I_{[rn+j]})}{rn+j} \leq \lim_{n \rightarrow \infty} \frac{n\alpha(I_{[r]})}{rn+j}+\lim_{n \rightarrow \infty}\frac{\alpha(I_{[j]})}{rn+j}.$$ 
Therefore, for a fix $j\geq 0$, $\displaystyle \lim_{n \rightarrow \infty }\frac{\alpha(I_{[rn+j]})}{rn+j}$ exists, and 
\begin{align}\label{eqalpha}
    \displaystyle\lim_{n \rightarrow \infty }\frac{\alpha(I_{[rn+j]})}{rn+j}=\frac{\alpha(I_{[r]})}{r}.
\end{align}
Now, any $k\geq r$ can be written as $k=rn+j$ for some positive integer $n$ and some $j\in\{0,\ldots,r-1\}$. Since $\{I_{[rn]}\}_{n\geq 1}$, $\{I_{[rn+1]}\}_{n\geq 1}$, \ldots, $\{I_{[rn+(r-1)]}\}_{n\geq 1}$ forms a partition of the sequence $\{I_{[k]}\}_{k\geq r}$, from equation \eqref{eqalpha} it follows that $\displaystyle\lim_{k\rightarrow\infty} \frac{\alpha(I_{[k]})}{k}$ exists and $\displaystyle\lim_{k\rightarrow\infty} \frac{\alpha(I_{[k]})}{k}=\frac{\alpha(I_{[r]})}{r}$.
\end{proof}
\noindent

\begin{lemma}\label{stable prime}
Let $\mathcal{I}=\{I_{[k]}\}_{k \geq 0}$ be a Noetherian graded filtration of a Noetherian $\mathbb{N}$-graded domain $R$. Then, there exists $c \in \mathbb{N}$ such that $\v(I_{[k]}) \geq \alpha(I_{[k]})-c$ for all $k\geq 1$.
\end{lemma}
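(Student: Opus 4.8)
The goal is to bound $\v(I_{[k]})$ below in terms of $\alpha(I_{[k]})$, uniformly in $k$. Since $\v(I_{[k]})$ is realized as the degree of a homogeneous $f$ with $I_{[k]} : f = \mathfrak{p}$ for some $\mathfrak{p} \in \operatorname{Ass}(I_{[k]})$, the natural strategy is to use the Noetherianity of the Rees algebra $R(\mathcal{I})$ to control the associated primes that can occur. The key classical input is that the set $\bigcup_{k \geq 1} \operatorname{Ass}(I_{[k]})$ is finite when $R(\mathcal{I})$ is Noetherian (this is the filtration analogue of Brodmann's stability; it can be extracted, e.g., from the fact that $\operatorname{Ass}(I_{[k]})$ stabilizes along arithmetic progressions, which also underlies the quasi-linearity of $\v(I_{[k]})$ cited in the introduction). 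So let $\mathcal{P} = \{\mathfrak{p}_1, \dots, \mathfrak{p}_s\}$ be this finite set of primes.

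First I would fix, for each $\mathfrak{p}_i \in \mathcal{P}$, a homogeneous element $g_i \in R \setminus \mathfrak{p}_i$ of minimal degree among homogeneous elements outside $\mathfrak{p}_i$ (equivalently, use that $R/\mathfrak{p}_i$ is a graded domain and pick a nonzero homogeneous element; one can even take $g_i$ to be a variable-like element, but for a general graded domain just take any homogeneous $g_i \notin \mathfrak{p}_i$). Set $c := \max_i \deg(g_i)$; this depends only on $\mathcal{I}$, not on $k$. Now fix $k \geq 1$ and suppose $\v(I_{[k]})$ is witnessed by $f \in R_d$ with $I_{[k]} : f = \mathfrak{p}_i$ for some $i$. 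Since $g_i \notin \mathfrak{p}_i = I_{[k]} : f$, we have $f g_i \notin I_{[k]}$. On the other hand $f g_i$ has degree $d + \deg(g_i) = \v(I_{[k]}) + \deg(g_i) \leq \v(I_{[k]}) + c$, and $f g_i$ is a homogeneous element not in $I_{[k]}$. Hence $\alpha(I_{[k]}) \leq$... — wait, that inequality goes the wrong way, since $f g_i \notin I_{[k]}$ tells us nothing about $\alpha$. Let me reconsider: the correct route is via Proposition \ref{colon}.

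The cleaner argument uses Proposition \ref{colon} directly. With $f$ as above, $I_{[k]} : f = \mathfrak{p}_i$, so $f \notin I_{[k]}$ (as $\mathfrak{p}_i$ is a proper ideal), and Proposition \ref{colon} is not quite what we want either since it bounds $\v(I_{[k]})$ above. Instead, I would argue as follows: pick any nonzero homogeneous $h \in I_{[k]}$ with $\deg h = \alpha(I_{[k]})$. Since $h \in I_{[k]} \subseteq \mathfrak{p}_i = I_{[k]} : f$, we get $hf \in I_{[k]}$ — again no contradiction. The real point must be that $f \notin \mathfrak{p}_i$ is impossible to guarantee; rather, one shows $\mathfrak{p}_i \nsubseteq$ relevant ideals generically. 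Let me instead use the structure: $I_{[k]} : f = \mathfrak{p}_i$ means $f \notin I_{[k]}$ and $f \mathfrak{p}_i \subseteq I_{[k]}$. Choosing $g_i \in \mathfrak{p}_i$ homogeneous nonzero of degree $\leq c' := \max_i \alpha(\mathfrak{p}_i)$ gives $f g_i \in I_{[k]}$, so $\alpha(I_{[k]}) \leq \deg(f g_i) = \v(I_{[k]}) + \deg(g_i) \leq \v(I_{[k]}) + c'$, i.e. $\v(I_{[k]}) \geq \alpha(I_{[k]}) - c'$. This is exactly the claim with $c = \max_{\mathfrak{p} \in \mathcal{P}} \alpha(\mathfrak{p})$.

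So the proof reduces to two ingredients: (1) finiteness of $\mathcal{P} = \bigcup_{k} \operatorname{Ass}(I_{[k]})$, which follows from Noetherianity of $R(\mathcal{I})$ (cite the relevant result from \cite{as24} or \cite{fs24} on asymptotic stability of associated primes of Noetherian filtrations, or deduce it from Lemma \ref{there-exist} together with Brodmann-type stability for the finitely many progressions $\{I_{[rn+j]}\}_n$); and (2) the elementary observation that if $\mathfrak{p} = I_{[k]} : f$ then multiplying $f$ by a minimal-degree element of $\mathfrak{p}$ lands inside $I_{[k]}$, forcing $\alpha(I_{[k]}) \leq \v(I_{[k]}) + \alpha(\mathfrak{p})$. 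The only real obstacle is (1): one must be sure the Noetherian hypothesis on $R(\mathcal{I})$ indeed yields a globally finite set of associated primes across all $k$ — and here invoking Lemma \ref{there-exist} (so $I_{[rk]} = (I_{[r]})^k$) together with the classical Brodmann stability for ordinary powers, applied to each of the $r$ residue classes $j = 0, \dots, r-1$ via $I_{[rn+j]} \supseteq (I_{[r]})^n I_{[j]}$ and $I_{[rn+j]} \supseteq (I_{[r]})^{n+1}$, should pin it down; I expect this bookkeeping to be the most delicate part of writing up the argument cleanly.
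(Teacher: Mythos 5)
Your core inequality is correct and is exactly the one the paper uses: if $I_{[k]}:f=\mathfrak{p}$ with $\deg f=\v(I_{[k]})$, then multiplying $f$ by a homogeneous element of $\mathfrak{p}$ of degree $\alpha(\mathfrak{p})$ lands in $I_{[k]}$, so $\alpha(I_{[k]})\le \v(I_{[k]})+\alpha(\mathfrak{p})$. The gap is in how you make the constant uniform in $k$. You take $c=\max\{\alpha(\mathfrak{p}):\mathfrak{p}\in\bigcup_k\operatorname{Ass}(I_{[k]})\}$ and rest the whole proof on the finiteness of $\bigcup_k\operatorname{Ass}(I_{[k]})$, which you do not prove. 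Your proposed derivation of it --- Brodmann's stability applied to each residue class $\{I_{[rn+j]}\}_n$ using the sandwich $(I_{[r]})^{n}I_{[j]}\subseteq I_{[rn+j]}\subseteq (I_{[r]})^n$ --- does not work as stated: Brodmann controls $\operatorname{Ass}(R/I^n)$ for powers of a fixed ideal, and an ideal merely sandwiched between $I^nJ$ and $I^n$ inherits no control on its associated primes from that containment. (The finiteness statement is in fact true for Noetherian filtrations, but it requires a McAdam--Eakin-type argument on the associated graded module $\bigoplus_k I_{[k]}/I_{[k+1]}$, not the route you sketch.)

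The paper avoids this issue entirely with a small but decisive observation: since $\mathfrak{p}\subseteq\mathfrak{q}$ implies $\alpha(\mathfrak{q})\le\alpha(\mathfrak{p})$, and every associated prime of $I_{[k]}$ contains a minimal prime of $I_{[k]}$, one has $\max\{\alpha(\mathfrak{p}):\mathfrak{p}\in\operatorname{Ass}(I_{[k]})\}=\max\{\alpha(\mathfrak{p}):\mathfrak{p}\in\operatorname{Min}(I_{[k]})\}$. The filtration axioms alone (namely $(I_{[1]})^k\subseteq I_{[k]}\subseteq I_{[1]}$) give $\operatorname{Min}(I_{[k]})=\operatorname{Min}(I_{[1]})$ for all $k\ge 1$, so $c:=\max\{\alpha(\mathfrak{p}):\mathfrak{p}\in\operatorname{Min}(I_{[1]})\}$ works, with no stabilization theorem needed. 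If you replace your step (1) by this reduction to minimal primes, your argument closes.
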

\begin{proof}
    Similar as in \cite{bms24}, for a graded ideal $I$ of $R$, let us define $c(I):=\max\{\alpha(\mathfrak{p})\mid \mathfrak{p}\in\mathrm{Ass}(I)\}$. Note that $\mathfrak{p}\subseteq \mathfrak{q}$ implies $\alpha(\mathfrak{q})\leq \alpha(\mathfrak{p})$. Thus, $c(I)=\max\{\alpha(\mathfrak{p})\mid \mathfrak{p}\in\mathrm{Min}(I)\}$. Following the proof of \cite[Lemma 3.1]{bms24}, we can derive that $\v(I)\geq \alpha(I)-c(I)$. Now, let $\mathfrak{p}'$ be a minimal prime of $I_{[k]}$ for some $k\geq 1$. Since $(I_{[1]})^k\subseteq I_{[k]}\subseteq \mathfrak{p}'$, $I_{[1]}\subseteq \mathfrak{p}'$, and $\mathfrak{p}'$ is a minimal prime of $I_{[1]}$ as $I_{[k]}\subseteq I_{[1]}$. Therefore, we have $\mathrm{Min}(I_{[1]})=\mathrm{Min}(I_{[k]})$ for all $k\geq 1$. Hence, $c(I_{[k]})=c$ is a constant for all $k\geq 1$, where $c=c(I_{[1]})$. This gives $\v(I_{[k]})\geq \alpha(I_{[k]})-c$ for all $k\geq 1$.
\end{proof}

\begin{theorem} \label{limit-vnumber}
Let $\mathcal{I}=\{I_{[k]}\}_{k \geq 0}$ be a Noetherian graded filtration of a Noetherian graded domain $R$. Then, $\displaystyle \lim_{k \rightarrow \infty}\frac{\v(I_{[k]})}{k}$ exists and 
$$\lim_{k \rightarrow \infty}\frac{\v(I_{[k]})}{k} =\lim_{k \rightarrow \infty}\frac{\alpha(I_{[k]})}{k}.$$
\end{theorem}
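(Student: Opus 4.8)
The plan is to establish the two inequalities $\displaystyle\liminf_{k\to\infty}\v(I_{[k]})/k\ge L$ and $\displaystyle\limsup_{k\to\infty}\v(I_{[k]})/k\le L$, where $L:=\displaystyle\lim_{k\to\infty}\alpha(I_{[k]})/k$ exists by Lemma~\ref{limit-exists}; together these give that the limit exists and equals $L=\displaystyle\lim_{k\to\infty}\alpha(I_{[k]})/k$. The lower bound is immediate from what is already in place: by Lemma~\ref{stable prime} there is a constant $c$ with $\v(I_{[k]})\ge\alpha(I_{[k]})-c$ for all $k\ge1$, so dividing by $k$ and letting $k\to\infty$ (using Lemma~\ref{limit-exists}) yields $\liminf_k\v(I_{[k]})/k\ge L$. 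Thus the whole content is the upper bound, and for this my plan is to produce, for each $k$, an explicit test element whose colon with $I_{[k]}$ is controlled via Proposition~\ref{colon}.

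First I would fix $r$ as in Lemma~\ref{there-exist}, so that $I_{[rn]}=(I_{[r]})^{n}$ for all $n\ge1$; write $J:=I_{[r]}$. Since $R$ is a domain, $\alpha(J^{n})=n\alpha(J)$, and by Lemma~\ref{limit-exists} (applied with this same $r$) one has $L=\alpha(J)/r$; I also use $\alpha(J)>0$, which is automatic when $R_{0}$ is a field and which I would assume in general (if $\alpha(J)=0$ then $L=0$ and a separate elementary argument is needed). Now fix once and for all a nonzero homogeneous $g\in J$ with $\deg g=\alpha(J)$. The elementary but crucial observation is that $g^{n-1}\notin I_{[rn+j]}$ for every $n\ge1$ and every $j\in\{0,1,\dots,r-1\}$: indeed $I_{[rn+j]}\subseteq I_{[rn]}=J^{n}$, and every nonzero element of $J^{n}$ has degree $\ge\alpha(J^{n})=n\alpha(J)$, whereas $g^{n-1}\neq0$ (domain) has degree $(n-1)\alpha(J)<n\alpha(J)$. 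Consequently $C^{(j)}_{n}:=I_{[rn+j]}:g^{n-1}$ is a proper ideal, and Proposition~\ref{colon} gives
\[
\v(I_{[rn+j]})\ \le\ \v\bigl(C^{(j)}_{n}\bigr)+(n-1)\alpha(J).
\]

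The next step is to show that, for each fixed $j$, the term $\v\bigl(C^{(j)}_{n}\bigr)$ is bounded independently of $n$. For this I would observe that the ideals $C^{(j)}_{n}$ form an ascending chain in $n$: if $h\in C^{(j)}_{n}$, i.e.\ $hg^{n-1}\in I_{[rn+j]}$, then $hg^{n}=(hg^{n-1})\,g\in I_{[rn+j]}\,I_{[r]}\subseteq I_{[r(n+1)+j]}$, so $h\in C^{(j)}_{n+1}$. Since $R$ is Noetherian this chain stabilises, say $C^{(j)}_{n}=K_{j}$ for all $n\ge n_{j}$. Then $\v(I_{[rn+j]})\le\v(K_{j})+(n-1)\alpha(J)$ for $n\ge n_{j}$, so dividing by $rn+j$ and letting $n\to\infty$ gives $\limsup_{n}\v(I_{[rn+j]})/(rn+j)\le\alpha(J)/r=L$. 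Finally, since the $r$ arithmetic progressions $\{rn+j\}_{n}$ ($0\le j\le r-1$) cover all large $k$, I would conclude $\limsup_{k}\v(I_{[k]})/k\le L$, which together with the lower bound finishes the proof.

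I do not expect a serious obstacle: the argument uses only the three lemmas already proved and Proposition~\ref{colon}. The points needing (minor) care are the degree estimate $g^{n-1}\notin I_{[rn+j]}$, which is exactly where the domain hypothesis enters; checking that each colon ideal $C^{(j)}_{n}$ is proper so that $\v$ is defined; and noting that the bounded overhead $\v(K_{j})$ cannot affect the limiting slope. If anything, the only genuine idea is the choice of the test element $f=g^{n-1}$ in Proposition~\ref{colon} together with the observation that the resulting colon ideals form a stabilising ascending chain — this is what replaces the heavier Rees-algebra and asymptotic-linearity machinery of \cite{as24,fs24}.
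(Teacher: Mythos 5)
Your proposal is correct and follows essentially the same route as the paper's own proof: the same test element $f^{n-1}$ with $\deg f=\alpha(I_{[r]})$, the same ascending chain of colon ideals $(I_{[rn+j]}:f^{n-1})$ stabilising by Noetherianity, Lemma~\ref{stable prime} for the lower bound, and the partition into the $r$ arithmetic progressions $\{rn+j\}$. Your explicit flag that the degree argument for $f^{n-1}\notin I_{[rn]}$ needs $\alpha(I_{[r]})>0$ is a small point of care the paper passes over silently, but otherwise the two arguments coincide.
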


\begin{proof}
By Lemma \ref{there-exist}, there exists a positive integer $r$ such that $I_{[rn]}=(I_{[r]})^n$ for all $n\geq 1$. Now, we can write any natural number $k\geq r$ as $k=rn+j$, for some $n,j\in \mathbb{N}$ such that $0 \leq j \leq r-1$. Let $f \in I_{[r]}$ be a homogeneous element with $\deg(f)=\alpha(I_{[r]})$. Then, $f^n \in {(I_{[r]})}^n=I_{[rn]}$ and  $\deg(f^n)=n\alpha(I_{[r]})=\alpha((I_{[r]})^n)=\alpha(I_{[rn]}).$ Thus, $f^{n-1} \not \in (I_{[r]})^{n-1}I_{[r]}=(I_{[r]})^{n}=I_{[rn]}$. This implies that
$$f^{n-1} \not \in I_{[rn+j]} \subseteq I_{[rn]} \text{ for any } j\geq 0.$$
Therefore, by Proposition \ref{colon}, we have $\v(I_{[rn+j]}) \leq \v((I_{[rn+j]}:f^{n-1})) +\deg(f^{n-1})$. First, we show that $(I_{[rn+j]}:f^{n-1}) \subseteq (I_{[r(n+1)+j]}:f^{n}) $ for all $n \geq 1$. Let $g \in (I_{[rn+j]} : f^{n-1})$. Then, $gf^{n-1} \in I_{[rn+j]}$. This implies that 
$$ gf^n=gf^{n-1}f \in I_{[rn+j]}I_{[r]} \subseteq I_{[r(n+1)+j]}.$$
Thus, $g \in (I_{[r(n+1)+j]}:f_n)$. Therefore, for any fixed $j\geq 0$, we have the following increasing sequence of ideals in $R$
 \begin{align*}
 I_{[r+j]} \subseteq (I_{[2r+j]} : f) \subseteq (I_{[3r+j]} : f^2) \subseteq \cdots \subseteq (I_{[nr+j]} : f^{n-1}) \subseteq (I_{[(n+1)r+j]} : f^{n}) \subseteq \cdots.
  \end{align*}
 Since $R$ is Noetherian, there exists a positive integer $m_j$ such that 
 \begin{align*}
   (I_{[rn+j]}:f^{n-1})=(I_{[rm_j+j]}:f^{m_j-1}) \mbox{ for all } n \geq m_j.
 \end{align*}
Thus, $\v((I_{[rn+j]}:f^{n-1}))=d_j$ for all $n \geq m_j$, where $d_j=\v((I_{[rm_j+j]}:f^{m_j-1}))$ is a constant. Hence, $\v(I_{[rn+j]}) \leq d_j +(n-1)\alpha(I_{[r]})$ for all $n\geq m_j$. Again, by Lemma \ref{stable prime}, there exists positive integers $n_0$ and $c$ such that $\v(I_{[rn+j]})\geq \alpha(I_{[rn+j]})-c$ for all $n\geq n_0$ and for any $j\geq 0$. Therefore, for a fix $j\geq 0$, and for all $n\geq \max\{n_0,m_j\}$, we have
\begin{align}\label{eq1}
\alpha(I_{[rn+j]}) -c \leq \v(I_{[rn+j]}) \leq d_j+(n-1)\alpha(I_{[r]}).
\end{align}
Now, for a large enough positive integer $k$, we can write $k=rn+j$, where $n\geq 1$ and $1\leq j\leq r-1$. Corresponding to a fix $j\in\{0,\ldots,r-1\}$, let us consider the subsequence of $\{I_{[k]}\}$ of the form $\{I_{[rn+j]}\}_{n\geq\max\{n_o,m_j\}}$. Note that $rn+j\rightarrow\infty$ implies $n\rightarrow\infty$. Thus, using \eqref{eq1}, we get
    \begin{align*}
   \lim_{n\rightarrow\infty}\frac{\alpha(I_{[rn+j]}) -c}{rn+j}&\leq \lim_{n\rightarrow\infty}\frac{\v(I_{[rn+j]})}{rn+j}\leq \lim_{n\rightarrow\infty}\frac{d_j+(n-1)\alpha(I_{[r]})}{rn+j}
   \end{align*}
   \begin{align}\label{eq2}
       \text{i.e., } \lim_{n\rightarrow\infty}\frac{\alpha(I_{[rn+j]})}{rn+j}&\leq \lim_{n\rightarrow\infty}\frac{\v(I_{[rn+j]})}{rn+j}\leq \frac{\alpha(I_{[r]})}{r}.\quad
    \end{align}
Due to Lemma \ref{limit-exists}, we have $\displaystyle \lim_{n \rightarrow \infty}\frac{\alpha(I_{[rn+j]})}{rn+j}=\frac{\alpha(I_{[r]})}{r}$. Hence, it follows from \eqref{eq2} that $\displaystyle\lim_{n\rightarrow\infty}\frac{\v(I_{[rn+j]})}{rn+j}$ exists, and 
\begin{align}\label{eq3}
   \lim_{n\rightarrow\infty}\frac{\v(I_{[rn+j]})}{rn+j}=\frac{\alpha(I_{[r]})}{r}. 
\end{align}
Note that the above equality is true for any $j\in \{1,\ldots,r-1\}$. Let us consider the positive integer $k_0=\max\{n_0,m_1,\ldots,m_{r-1}\}$. Then $\{I_{[rn]}\}_{n\geq k_0}$, $\{I_{[rn+1]}\}_{n\geq k_0}$, \ldots, $\{I_{[rn+(r-1)]}\}_{n\geq k_0}$ forms a partition of $\{I_{[k]}\}_{k\geq k_0}$. Hence, by equation \eqref{eq3}, it follows that $\displaystyle\lim_{k\rightarrow\infty}\frac{\v(I_{[k]})}{k}$ exists, and $\displaystyle\lim_{k\rightarrow\infty}\frac{\v(I_{[k]})}{k}=\frac{\alpha(I_{[r]})}{r}$. 
\end{proof}


\begin{corollary}\label{waldscmidt-constant}
 Let $I \subseteq R$ be a graded ideal of a Noetherian graded domain $R$ such that symbolic Rees algebra $R_s(I)$ is Noetherian. Then,
 \begin{align*}
     \lim_{k \rightarrow \infty}\frac{\v(I^{(k)})}{k}=\hat{\alpha}(I),
\end{align*}
where $\hat{\alpha}(I)$ is the Waldschmidt constant of $I$.
 \end{corollary}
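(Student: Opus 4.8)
The plan is to deduce Corollary~\ref{waldscmidt-constant} directly from Theorem~\ref{limit-vnumber} by verifying that the symbolic powers of $I$ form a Noetherian graded filtration of $R$ and then matching the two limits. First I would check that $\mathcal{I} = \{I^{(k)}\}_{k \geq 0}$ satisfies the three axioms of a graded filtration: $I^{(0)} = R$ is immediate from either definition in Definition~\ref{def-symbolic}; the containment $I^{(k+1)} \subseteq I^{(k)}$ follows since $I^{k+1}R_P \subseteq I^kR_P$ for each relevant prime $P$; and the multiplicativity $I^{(k)}I^{(r)} \subseteq I^{(k+r)}$ follows by localizing at each $P$ and using $I^kR_P \cdot I^rR_P \subseteq I^{k+r}R_P$ together with the fact that $\mathrm{Ass}(R/I)$ (resp.\ $\Min(I)$) is the common index set. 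The Noetherian hypothesis is exactly the assumption that the symbolic Rees algebra $R_s(I) = \bigoplus_{k \geq 0} I^{(k)}t^k$ is a Noetherian ring, so $\mathcal{I}$ is a Noetherian graded filtration of the Noetherian graded domain $R$.

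Next I would invoke Theorem~\ref{limit-vnumber} with $I_{[k]} = I^{(k)}$ to conclude that $\displaystyle\lim_{k \to \infty} \frac{\v(I^{(k)})}{k}$ exists and equals $\displaystyle\lim_{k \to \infty} \frac{\alpha(I^{(k)})}{k}$. The final step is simply to observe that the right-hand limit is, by definition, the Waldschmidt constant $\hat{\alpha}(I) = \displaystyle\lim_{k \to \infty} \frac{\alpha(I^{(k)})}{k}$ (this limit existing being guaranteed by Lemma~\ref{limit-exists}, or classically by Fekete's subadditivity lemma applied to $\{\alpha(I^{(k)})\}$, which is subadditive since $I^{(k)}I^{(r)} \subseteq I^{(k+r)}$ forces $\alpha(I^{(k+r)}) \leq \alpha(I^{(k)}) + \alpha(I^{(r)})$). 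Stringing these equalities together yields $\displaystyle\lim_{k \to \infty} \frac{\v(I^{(k)})}{k} = \hat{\alpha}(I)$.

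Since this is purely a matter of unwinding definitions and citing the already-established theorem, there is no real obstacle; the only point requiring a line of care is confirming that the symbolic powers genuinely form a \emph{graded} filtration of ideals (each $I^{(k)}$ is a graded ideal when $I$ is graded, because $R_P$ and the contraction operation preserve the grading in the relevant sense), and that the notion of ``symbolic Rees algebra being Noetherian'' coincides with the Rees algebra $R(\mathcal{I})$ of the filtration being Noetherian — which it does by construction. One might also remark that the statement is independent of which of the two definitions of symbolic power in Definition~\ref{def-symbolic} is used, consistent with the convention fixed earlier in the paper.
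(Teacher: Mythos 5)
Your proposal is correct and follows exactly the route the paper intends: the corollary is stated there as an immediate consequence of Theorem~\ref{limit-vnumber} applied to the symbolic filtration $\{I^{(k)}\}_{k\geq 0}$, with the limit $\lim_{k\to\infty}\alpha(I^{(k)})/k$ identified with $\hat{\alpha}(I)$ by definition. Your extra care in checking the filtration axioms and the equivalence of the Noetherian hypotheses is a sensible (if routine) elaboration of what the paper leaves implicit.
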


\begin{remark}{\rm
Let $M$ be a finitely generated graded $R$-module, $N$ a graded submodule of $M$, and let $\mathcal{I} = \{I_{[k]}\}_{k \geq 0}$ be a Noetherian graded filtration of $R$. Define $\alpha(M) := \min\{d : M_d \neq 0\}$. In \cite{fg24}, the author introduced the $\v$-number for modules in a natural way. Then under suitable assumptions and modifications, it can be shown that the $\v$-function of $\{I_{[k]}M/I_{[k+1]}N\}_{k \geq 0}$ and $\{M/I_{[k+1]}N\}_{k\geq 0}$ are quasi-linear for $k>>0$ as mentioned in \cite[Remark 1.9]{fs24}. Consequently, it becomes feasible to extend Theorem \ref{limit-vnumber} to filtrations $\{I_{[k]}M/I_{[k+1]}N\}_{k \geq 0}$ and $\{M/I_{[k+1]}N\}_{k\geq 0}$, with modifications made to the proof and under analogous appropriate conditions. That is, one can establish that 
$$\displaystyle \lim_{k \rightarrow \infty}\frac{1}{k} \v\bigg(\frac{I_{[k]}M}{I_{[k+1]}N}\bigg)=\lim_{k \rightarrow \infty}\frac{1}{k}\alpha\bigg(\frac{I_{[k]}M}{I_{[k+1]}N}\bigg) \text{ and } \displaystyle \lim_{k \rightarrow \infty}\frac{1}{k}\v\bigg(\frac{M}{I_{[k+1]}N}\bigg)=\lim_{k \rightarrow \infty}\frac{1}{k} \alpha \bigg(\frac{M}{I_{[k+1]}N}\bigg).$$

\noindent \textbf{N.B.} Recently, in \cite{as24}, the authors proved that for an equigenerated graded ideal $I$ of $R$, if the filtration $\mathcal{I}=\{I^{(k)}\}_{k\geq 0}$ is Noetherian, then $\displaystyle\lim_{k\rightarrow \infty}\frac{\v_{\mathfrak{p}}(I^{(k)})}{k}$ exists for any stable prime ideal $\mathfrak{p}$ of $I$.
\medskip

}
\end{remark}

\section{The $\v$-number of symbolic powers of monomial ideals}\label{square-free-monomial-ideal}
In this section, we first show that for non-equigenerated square-free monomial ideals, the $\v$-number of symbolic powers is eventually smaller than regularity. Additionally, we provide an upper bound on the $\v$-number of symbolic powers for square-free monomial ideals. As a consequence, we compare the $\v$-number and regularity of all symbolic powers of certain classes of square-free monomial ideals.

Now recall the following notions from \cite{cehh17}. For a monomial ideal $I$, the {\it Newton Polyhedron} of $I$, denoted $\text{NP}(I)$, defined as 
$\text{NP}(I) := \text{convex hull}(\{ a \in \mathbb{N}^n \mid x^a \in I \}).$ Let $I$ has a minimal primary decomposition
$I = Q_1 \cap \dots \cap Q_s \cap Q_{s+1} \cap \dots \cap Q_t$, 
where $Q_1, \dots, Q_s$ are all the primary monomial ideals associated with the minimal prime ideals of $I$. Then the {\it Symbolic Polyhedron} associated to $I$, denoted $\mathcal{SP}(I)$, is defined as,
\begin{align*}
     \mathcal{SP}(I) := \text{NP}(Q_1) \cap \dots \cap \text{NP}(Q_s) \subset \mathbb{R}^r, 
\end{align*}
where $\text{NP}(Q_i)$ is the Newton Polyhedron of $Q_i$. For a vector $v = (v_1, \dots, v_r) \in \mathbb{R}^r$, denote $|v| = v_1 + \dots + v_r$. Let $\delta(I) = \max\{ |v| : v \text{ is a vertex of } \mathcal{SP}(I) \}.$ Also, $\hat{\alpha}(I) = \min\{ |v| : v \text{ is a vertex of } \mathcal{SP}(I) \}.$ Note that $\hat{\alpha}(I) \leq \delta(I)$. 

\begin{proposition}\label{alpha-delta}
 Let $I$ be a monomial ideal of $R=\mathbb{K}[x_1,\ldots,x_n]$ such that $\hat{\alpha}(I) < \delta(I)$. Then, $\v(I^{(k)}) \leq \reg(R/I^{(k)})$ for all $k \gg 0$.   
\end{proposition}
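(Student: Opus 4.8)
The plan is to combine the asymptotic linearity of the $\v$-function and of regularity with the precise comparison of their slopes. First I would recall that by \cite{fs24} (or \cite{as24}) the function $\v(I^{(k)})$ is quasi-linear for $k \gg 0$, and by Theorem \ref{limit-vnumber} together with Corollary \ref{waldscmidt-constant} its slope equals $\hat{\alpha}(I)$; more precisely, on each residue class $k \equiv j \pmod r$ (for suitable $r$) we have $\v(I^{(k)}) = \hat{\alpha}(I)\,k + O(1)$, since every linear branch has the same leading coefficient $\hat\alpha(I)$. On the regularity side, for a monomial ideal whose symbolic Rees algebra is Noetherian (which is automatic here, as $I$ is a monomial ideal with $\mathcal{SP}(I)$ a rational polyhedron), the function $\reg(R/I^{(k)})$ is also eventually quasi-linear in $k$, and its slope is known to be $\delta(I)$ — this is precisely the content of the $\delta(I)$ invariant from \cite{cehh17}: $\lim_{k\to\infty}\reg(R/I^{(k)})/k = \delta(I)$. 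So both sides are eventually quasi-linear, with $\v(I^{(k)})$ having slope $\hat\alpha(I)$ and $\reg(R/I^{(k)})$ having slope $\delta(I)$.

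Given the hypothesis $\hat{\alpha}(I) < \delta(I)$, the conclusion is then essentially immediate: writing $\v(I^{(k)}) = \hat\alpha(I) k + c_1$ and $\reg(R/I^{(k)}) \geq \delta(I) k - c_2$ for $k \gg 0$ with constants $c_1, c_2$ (taking the maximum over the finitely many residue classes), the difference $\reg(R/I^{(k)}) - \v(I^{(k)}) \geq (\delta(I) - \hat\alpha(I)) k - (c_1 + c_2)$ tends to $+\infty$, hence is nonnegative for all $k \gg 0$. I would write this out carefully, being explicit that the $O(1)$ upper bound on $\v$ comes from the finitely many linear branches all having slope $\hat\alpha(I)$ (this is exactly Theorem \ref{limit-vnumber} applied to the filtration $\{I^{(k)}\}$, which is Noetherian for monomial ideals), and that the linear lower bound on regularity is standard.

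The main obstacle is not the slope inequality itself but ensuring that the two quasi-linear descriptions hold \emph{simultaneously} for large $k$ and that I may legitimately bound $\v$ from above by a single affine function: this requires that the Noetherian hypothesis on $\mathcal{R}_s(I)$ be in force, which for a monomial ideal follows from the rationality of the symbolic polyhedron, so that Theorem \ref{limit-vnumber} applies and pins all the linear branches of $\v(I^{(k)})$ to the common slope $\hat\alpha(I)$. A secondary point to handle with care is the precise statement that $\delta(I)$ is the slope of $\reg(R/I^{(k)})$; if one prefers to avoid invoking this in full strength, it suffices to use only the lower bound $\reg(R/I^{(k)}) \geq \alpha(I^{(k)}) - 1$ is \emph{not} enough here (that would only give slope $\hat\alpha(I)$), so the genuine input $\lim \reg(R/I^{(k)})/k = \delta(I)$ from \cite{cehh17} is needed — this is the one external ingredient the argument genuinely relies on, and I would cite it explicitly.
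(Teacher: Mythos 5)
Your proposal is correct and follows essentially the same route as the paper: both arguments combine the quasi-linearity of $\v(I^{(k)})$ and $\reg(R/I^{(k)})$ (with slopes $\hat\alpha(I)$ and $\delta(I)$ respectively, the latter being \cite[Theorem 3.6]{dhnt21} rather than \cite{cehh17} itself) and conclude that the difference $(\delta(I)-\hat\alpha(I))k + O(1)$ is eventually nonnegative. The only cosmetic difference is that the paper writes out the finitely many linear branches explicitly while you absorb them into a uniform $O(1)$ constant.
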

\begin{proof}
By \cite[Theorem 3.6]{dhnt21}, we have $\displaystyle \lim_{k \rightarrow \infty}\frac{\reg(I^{(k)})}{k}=\delta(I)$  and by Corollary \ref{waldscmidt-constant}, we have $\displaystyle \lim_{k \rightarrow \infty}\frac{\v(I^{(k)})}{k} =\hat{\alpha}(I).$ Again, from \cite[Theorem 1.3.]{fs24} and \cite[Corollary 3.3.]{hht07}, it follows that $\v(I^{(k)})$ and $\reg(I^{(k)})$ are quasi-linear. Therefore, there exist non-negative integers $k_1$, $d_1$ and integers $b_1,\ldots,b_{d_1}$  such that
$\v(I^{(k)})=\hat{\alpha}(I)k+b_i, \text{for all } k\geq k_1, \text{ where } i \equiv k \pmod{d_1}.$
Similarly, there exist non-negative integers $k_2$, $d_2$ and integers $c_1,\ldots,c_{d_2}$ such that $\reg(R/I^{(k)})=\delta(I)k+c_j, \text{ for all } k\geq k_2, \text{ where }  j \equiv k \pmod{d_2}$. Hence, by choosing $k\geq \max\Big\{\frac{b_i-c_j}{\delta(I)-\hat{\alpha}(I)},k_1,k_2: 1 \leq i \leq d_1, 1\leq j \leq d_2\Big\},$ we get 
$$\reg(R/I^{(k)})-\v(I^{(k)}) =((\delta(I)-\hat{\alpha}(I))k+c_j-b_i) \geq 0,$$ where $i \equiv k \pmod{d_1}$ and $j \equiv k \pmod{d_2},$ as required. 
\end{proof}

\begin{corollary} \label{nonequi}
Let $I$ be a non-equigenerated square-free monomial ideal in $R$. Then, $\v(I^{(k)}) \leq \reg(R/I^{(k)})$ for all $k \gg 0$.     
\end{corollary}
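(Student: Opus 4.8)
The plan is to deduce this from Proposition~\ref{alpha-delta}: it suffices to show that a non-equigenerated square-free monomial ideal $I \subseteq R = \mathbb{K}[x_1,\dots,x_n]$ satisfies $\hat{\alpha}(I) < \delta(I)$. Since $I$ is square-free it is radical, so its minimal primary decomposition has the form $I = P_1 \cap \dots \cap P_s$ with each $P_i = (x_j : j \in C_i)$ a monomial prime. Then $\mathrm{NP}(P_i) = \{v \in \mathbb{R}^n_{\ge 0} : \sum_{j \in C_i} v_j \ge 1\}$, so
\[
\mathcal{SP}(I) = \Bigl\{\, v \in \mathbb{R}^n_{\ge 0} \ :\ \textstyle\sum_{j \in C_i} v_j \ge 1 \ \text{ for all } i = 1,\dots,s \,\Bigr\}.
\]
Also, a square-free monomial $x^A := \prod_{j \in A} x_j$ lies in $I$ precisely when $A \cap C_i \ne \emptyset$ for every $i$; hence the minimal generators of $I$ are exactly the $x^A$ with $A$ a \emph{minimal} transversal of the family $\{C_1,\dots,C_s\}$.

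The key step is the claim: for each minimal generator $x^A$ of $I$, the $0/1$ vector $\mathbf{1}_A \in \mathbb{R}^n$ is a vertex of $\mathcal{SP}(I)$ (note $|\mathbf{1}_A| = |A| = \deg(x^A)$). Membership $\mathbf{1}_A \in \mathcal{SP}(I)$ is clear, since $\sum_{j \in C_i}(\mathbf{1}_A)_j = |A \cap C_i| \ge 1$. For the vertex property I would exhibit $n$ supporting inequalities of $\mathcal{SP}(I)$ that are tight at $\mathbf{1}_A$ and whose normals span $\mathbb{R}^n$: the inequalities $v_j \ge 0$ for $j \notin A$ are tight and contribute $\{e_j : j \notin A\}$; and by minimality of the transversal $A$, for each $j^\ast \in A$ there is an index $i$ with $A \cap C_i = \{j^\ast\}$, so $\sum_{j\in C_i}(\mathbf{1}_A)_j = 1$ and the normal $\mathbf{1}_{C_i} = e_{j^\ast} + \sum_{j \in C_i \setminus A} e_j$ is congruent to $e_{j^\ast}$ modulo $\mathrm{span}\{e_j : j \notin A\}$. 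Together these normals span $\mathbb{R}^n$, so $\mathbf{1}_A$ is a vertex of $\mathcal{SP}(I)$.

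Granting the claim, and writing $G(I)$ for the minimal generating set of $I$, we obtain
\[
\hat{\alpha}(I) = \min\{\,|v| : v \text{ a vertex of } \mathcal{SP}(I)\,\} \le \min\{\deg g : g \in G(I)\}
\]
and
\[
\delta(I) = \max\{\,|v| : v \text{ a vertex of } \mathcal{SP}(I)\,\} \ge \max\{\deg g : g \in G(I)\}.
\]
As $I$ is non-equigenerated, $\min\{\deg g : g \in G(I)\} < \max\{\deg g : g \in G(I)\}$, whence $\hat{\alpha}(I) < \delta(I)$, and Proposition~\ref{alpha-delta} yields $\v(I^{(k)}) \le \reg(R/I^{(k)})$ for all $k \gg 0$.

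Everything is routine except the vertex claim, which is where the combinatorics of square-free monomial ideals enters; the point I expect to need care is checking that minimality of the transversal $A$ produces exactly the right tight facets of $\mathcal{SP}(I)$ at $\mathbf{1}_A$ to force it to be a $0$-dimensional face. Once that is in hand, the chain $\hat{\alpha}(I) \le \min\{\deg g\} < \max\{\deg g\} \le \delta(I)$ and the appeal to Proposition~\ref{alpha-delta} are immediate.
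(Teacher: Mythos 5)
Your proposal is correct, and its overall strategy coincides with the paper's: reduce to showing $\hat{\alpha}(I) < \delta(I)$ and invoke Proposition~\ref{alpha-delta}. The only difference is how the strict inequality is obtained. The paper simply cites two known facts, namely $\hat{\alpha}(I) \leq \alpha(I)$ from \cite[Proposition 4.8]{cdffhsty22} and $w(I) \leq \delta(I)$ from \cite[Lemma 4.3]{dhnt21}, and combines them with $\alpha(I) < w(I)$. You instead prove the stronger, self-contained statement that for every minimal generator $x^A$ of $I$ the indicator vector $\mathbf{1}_A$ is a vertex of $\mathcal{SP}(I)$, which yields both cited inequalities at once; your verification is sound (minimality of the transversal $A$ does produce, for each $j^\ast \in A$, a cover $C_i$ with $A \cap C_i = \{j^\ast\}$, and the resulting $n$ active normals are linearly independent by the block-triangular structure you describe, so $\mathbf{1}_A$ is indeed a $0$-dimensional face). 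What your route buys is independence from the two external references and a slightly sharper picture of $\mathcal{SP}(I)$ (every generator degree is realized by a vertex); what the paper's route buys is brevity. Either way the conclusion $\hat{\alpha}(I) \le \alpha(I) < w(I) \le \delta(I)$ and the appeal to Proposition~\ref{alpha-delta} are the same.
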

\begin{proof}
By \cite[Proposition 4.8.]{cdffhsty22}, $\hat{\alpha}(I) \leq \alpha(I)$ and by \cite[Lemma 4.3]{dhnt21}, $w(I) \leq \delta(I)$, where $w(I)$ is the maximal degree of minimal generators of $I$. Since $I$ is non-equigenerated, $\alpha(I) < w(I)$, and this implies $\hat{\alpha}(I)<\delta(I)$. Therefore, by Proposition \ref{alpha-delta}, it follows that $\v(I^{(k)}) \leq \reg(R/I^{(k)})$ for all $k \gg 0$.   
\end{proof}

In \cite{nkrt22}, the authors define the symbolic strong persistence property by taking the definition of symbolic power as in Definition \ref{def-symbolic} (2). However, it can be defined by considering any definition of symbolic power as follows:

\begin{definition}{\rm
    Let $I$ be an ideal in a Noetherian ring $R$. Then $I$ is said to have the \textit{symbolic strong persistence property} if 
    $(I^{(k)}:I^{(1)})=I^{(k-1)} \text{ for all } k\geq 1.$
    }
\end{definition}

\begin{theorem}\label{thm_vsym_ub}
    Let $I$ be a graded ideal in a $\mathbb{N}$-graded Noetherian domain $R$. If $I$ has the symbolic strong persistence property, then
    $\v(I^{(k)})\leq (k-1)d(I^{(1)})+\v(I^{(1)}), \text{ for all } k \geq 1$. 
\end{theorem}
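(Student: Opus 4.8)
The plan is to bootstrap from the base case $k=1$ (which is trivially an equality) by repeatedly applying Proposition \ref{colon} together with the symbolic strong persistence property. Recall Proposition \ref{colon} says that for a homogeneous $f \notin J$ in a graded ideal $J$ of a Noetherian $\mathbb{N}$-graded domain, $\v(J) \leq \v(J:f) + \deg(f)$. I would apply this with $J = I^{(k)}$ and a carefully chosen $f \in I^{(1)}$.

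\textbf{Key steps.} First, fix a homogeneous element $f \in I^{(1)}$ with $\deg(f) = \alpha(I^{(1)}) =: \alpha$; note the theorem statement as written uses $d(I^{(1)})$, which in this context should mean $\alpha(I^{(1)})$ (the minimum degree of a nonzero homogeneous element — I would match whatever notation $d(\cdot)$ denotes elsewhere in the paper, presumably the initial degree). Second, I claim $f \notin I^{(k)}$ for all $k \geq 2$: indeed if $f \in I^{(k)}$ then by the symbolic strong persistence property applied $(k-1)$ times we would get $f \cdot I^{(1)} \cdots$ — more directly, $f \in I^{(k)} \subseteq I^{(2)}$ and $(I^{(2)}:I^{(1)}) = I^{(1)}$ gives nothing, so instead I argue via degrees: if $f$ had minimal degree $\alpha$ in $I^{(1)}$ and lay in $I^{(k)}$, one still needs an argument. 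The cleanest route: pick $f$ to be a single generator realizing $\alpha(I^{(1)})$; in the monomial / domain setting $f^{k-1} \notin I^{(k-1)} \cdot I^{(1)} = \cdots$, paralleling exactly the argument in the proof of Theorem \ref{limit-vnumber} where $f^{n-1} \notin I_{[rn]}$. So I would instead iterate on powers of $f$: show $f^{j} \notin I^{(j+1)}$ using that $\alpha(I^{(j+1)}) \geq (j+1)\alpha$ would be too strong in general, so the right statement is just $f \notin I^{(k)}$ which follows because, by the symbolic strong persistence property, $I^{(k)} : I^{(k-1)} \supseteq I^{(1)}$ is false in general — let me instead use persistence the other direction: $(I^{(k)} : I^{(1)}) = I^{(k-1)}$ directly gives the recursion I want without needing $f \notin I^{(k)}$ as a separate fact, via: apply Proposition \ref{colon} to $I^{(k)}$ and $f$, valid as long as $f \notin I^{(k)}$, yielding $\v(I^{(k)}) \leq \v(I^{(k)} : f) + \alpha = \v(I^{(k-1)}) + \alpha$, since $(I^{(k)} : f) = (I^{(k)} : (f)) \supseteq (I^{(k)} : I^{(1)}) = I^{(k-1)}$ and in fact for suitably chosen $f$ we get equality, or at worst $(I^{(k)}:f) = I^{(k-1)}$ when $f$ generates $I^{(1)}$ up to the relevant localization. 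Then induction on $k$ gives $\v(I^{(k)}) \leq \v(I^{(1)}) + (k-1)\alpha(I^{(1)})$.

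\textbf{The main obstacle} is pinning down that $(I^{(k)} : f) = I^{(k-1)}$ (not merely $\supseteq$) for the chosen $f$, and that $f \notin I^{(k)}$ so that Proposition \ref{colon} applies. For $\supseteq$: if $g \in I^{(k-1)}$ then $gf \in I^{(k-1)} I^{(1)} \subseteq I^{(k)}$, so $g \in (I^{(k)}:f)$ — this is immediate and is all I actually need, since Proposition \ref{colon} already gives $\v(I^{(k)}) \le \v(I^{(k)}:f) + \alpha$ and then I need $\v(I^{(k)}:f) \le \v(I^{(k-1)})$. But $\v$ is not monotone under inclusion in general, so I do need the reverse containment $(I^{(k)}:f) \subseteq I^{(k-1)}$, which is where the symbolic strong persistence property $(I^{(k)}:I^{(1)}) = I^{(k-1)}$ enters: if $gf \in I^{(k)}$ then $g \in (I^{(k)} : (f))$, and since $(f) \subseteq I^{(1)}$ we only get $g \in (I^{(k)}:(f)) \supseteq (I^{(k)}:I^{(1)})$ — wrong direction again. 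So the correct setup must be: choose $f$ with $(f)$ "dominating" $I^{(1)}$ at every associated prime, i.e. $f$ a minimal reduction or simply use that after localizing at each $P \in \mathrm{Ass}$, $I^{(1)} R_P = (\text{something})$; concretely I would take $f \in I^{(1)}$ homogeneous of degree $\alpha(I^{(1)})$ and argue $(I^{(k)} : f) = I^{(k-1)}$ by showing $\subseteq$ locally — for $P \in \mathrm{Min}(I)$, $I^{(k)}R_P = I^k R_P$ and persistence/Artin–Rees type arguments... Honestly the slickest fix, and the one I expect the paper uses, is: $f \in I^{(1)}$, $f \notin I^{(k)}$ is guaranteed because $I^{(k)} \subsetneq I^{(1)} \ni f$ would need checking but pick $f$ outside $I^{(2)}$; and then one shows $(I^{(k)}:f) = I^{(k-1)}$ by the chain $I^{(k-1)} \subseteq (I^{(k)}:f) \subseteq (I^{(k)} : f^{?})$ combined with $(I^{(k)}:I^{(1)}) = I^{(k-1)}$ after noting $f^{k-1}$ generates a reduction. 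I would therefore structure the proof as: (1) state the elementary containment $I^{(k-1)} \subseteq (I^{(k)} : f)$; (2) use symbolic strong persistence and the choice of $f$ to get $(I^{(k)}:f) \subseteq (I^{(k)} : I^{(1)}) = I^{(k-1)}$ — this requires $(f) \supseteq I^{(1)}$ up to the colon, which fails set-theoretically but one can replace $f$ by an element that works, or argue $\v((I^{(k)}:f)) \le \v(I^{(k-1)})$ directly; (3) conclude by induction. The cleanest honest version: Proposition \ref{colon} plus $(I^{(k)}:I^{(1)}) = I^{(k-1)}$ gives, taking $f$ a degree-$\alpha(I^{(1)})$ element of $I^{(1)}$ with $f \notin I^{(k)}$, that $\v(I^{(k)}) \le \v(I^{(k)}:f) + \alpha(I^{(1)})$, and since $I^{(k-1)} = (I^{(k)}:I^{(1)}) \subseteq (I^{(k)}:f)$, the remaining task $\v(I^{(k)}:f) \le \v(I^{(k-1)})$ needs the reverse inclusion $(I^{(k)}:f) \subseteq I^{(k-1)}$, valid when $\sqrt{(f)} \supseteq \sqrt{I^{(1)}}$, i.e. for instance when $I$ is $\mathfrak{m}$-primary or more generally — I would check whether the paper restricts to such $f$ existing, and if not, present the induction using whatever colon identity the symbolic strong persistence property literally supplies.
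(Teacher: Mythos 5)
There is a genuine gap: you correctly identify the obstacle --- that for a single homogeneous $f\in I^{(1)}$ the symbolic strong persistence property only yields $I^{(k-1)}=(I^{(k)}:I^{(1)})\subseteq (I^{(k)}:f)$, which is the wrong direction, and $\v$ is not monotone under inclusion --- but you never overcome it. Your proposal ends by conceding that the needed reverse containment $(I^{(k)}:f)\subseteq I^{(k-1)}$ ``fails set-theoretically'' and deferring to ``whatever colon identity the paper literally supplies.'' That is precisely the step where the proof lives, so as written this is not a proof.

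The missing idea is to abandon the single element $f$ and the route through Proposition \ref{colon} altogether. Take a minimal generating set $f_1,\dots,f_r$ of $I^{(1)}$ and a homogeneous witness $g$ for $\v(I^{(k-1)})$, i.e.\ $(I^{(k-1)}:g)=\mathfrak{p}$ with $\mathfrak{p}\in\mathrm{Ass}(I^{(k-1)})$ and $\deg g=\v(I^{(k-1)})$. Then the symbolic strong persistence property gives $\mathfrak{p}=((I^{(k)}:I^{(1)}):g)=\bigcap_{i=1}^{r}(I^{(k)}:f_ig)$, a \emph{finite} intersection of ideals each containing $\mathfrak{p}$; since $\mathfrak{p}$ is prime, one of them must equal $\mathfrak{p}$ (if each $(I^{(k)}:f_ig)$ strictly contained $\mathfrak{p}$, their product would lie in $\mathfrak{p}$ while avoiding it). Hence $(I^{(k)}:f_ig)=\mathfrak{p}$ for some $i$, so $f_ig$ is a witness and $\v(I^{(k)})\leq\deg(f_i)+\deg(g)\leq d(I^{(1)})+\v(I^{(k-1)})$; iterating gives the claim. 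Note also that $d(I^{(1)})$ in the statement is the \emph{maximal} degree of a minimal generator of $I^{(1)}$ (this is why the bound involves $d$ and not $\alpha$): the argument cannot control which generator $f_i$ realizes the prime, so it must pay the worst-case generator degree, not $\alpha(I^{(1)})$ as in your setup.
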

\begin{proof}
    Let $\{f_1,\ldots,f_r\}$ be a minimal generating set of $I^{(1)}$. Let $g\in R$ be a homogeneous element such that $(I^{(k-1)}:g)=\mathfrak{p}$ for some $\mathfrak{p}\in\mathrm{Ass}(I^{(k-1)})$ and $\v(I^{(k-1)})=\deg(g)$. Now, due to the symbolic strong persistence property of $I$, we have
 \begin{align*}
 \mathfrak{p}=(I^{(k-1)}:g)=((I^{(k)}:I^{(1)}):g)=\bigg(\bigcap_{i=1}^{r}\big(I^{(k)}:f_i\big):g\bigg)= \bigcap_{i=1}^{r}\big((I^{(k)}:f_i):g\big)= \bigcap_{i=1}^{r}\big(I^{(k)}:f_ig\big).
 \end{align*}   
    This gives $(I^{(k)}:f_{i}g)=\mathfrak{p}$ for some $1\leq i\leq r$. Therefore, $\v(I^{(k)})\leq \deg(f_i)+\deg(g)\leq d(I^{(1)})+\v(I^{(k-1)})$. Now, repeated application gives $\v(I^{(k)})\leq (k-1)d(I^{(1)})+\v(I^{(1)})$.
\end{proof}

\begin{corollary}
    Let $I$ be a graded ideal with no embedded prime in a $\mathbb{N}$-graded Noetherian domain $R$. If $I$ has the symbolic strong persistence property, then
    $\v(I^{(k)})\leq (k-1)d(I)+\v(I)$, for all $k\geq 1$.
\end{corollary}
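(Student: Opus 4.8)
The final statement to prove is the Corollary immediately following Theorem \ref{thm_vsym_ub}: if $I$ is a graded ideal with no embedded prime in an $\mathbb{N}$-graded Noetherian domain $R$ having the symbolic strong persistence property, then $\v(I^{(k)}) \leq (k-1)d(I) + \v(I)$ for all $k \geq 1$.

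The plan is to deduce this directly from Theorem \ref{thm_vsym_ub} by observing that the hypothesis ``no embedded prime'' forces $I^{(1)} = I$. First I would recall that $I^{(1)} = \bigcap_{P \in \mathrm{Ass}(R/I)} (IR_P \cap R)$ (Definition \ref{def-symbolic}(1)), and that when $I$ has no embedded primes, $\mathrm{Ass}(R/I) = \mathrm{Min}(I)$, so both definitions of symbolic power agree and, moreover, the intersection $\bigcap_{P \in \mathrm{Min}(I)}(IR_P \cap R)$ recovers $I$ itself: indeed, writing a (reduced) primary decomposition $I = Q_1 \cap \cdots \cap Q_t$ with $Q_i$ being $P_i$-primary, the absence of embedded primes means every $P_i$ is minimal over $I$, and localizing at $P_i$ kills all components $Q_j$ with $P_j \not\subseteq P_i$ while fixing $Q_i$, so $IR_{P_i} \cap R \supseteq Q_i$ with equality after intersecting over all $i$; hence $I^{(1)} = I$. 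Consequently $d(I^{(1)}) = d(I)$ and $\v(I^{(1)}) = \v(I)$.

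With $I^{(1)} = I$ established, the conclusion is immediate: Theorem \ref{thm_vsym_ub} gives $\v(I^{(k)}) \leq (k-1)d(I^{(1)}) + \v(I^{(1)}) = (k-1)d(I) + \v(I)$ for all $k \geq 1$, which is exactly the claimed bound. One should also note that the symbolic strong persistence property and the Noetherianity needed to invoke Theorem \ref{thm_vsym_ub} are already part of the hypotheses, so nothing further is required.

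I do not anticipate a serious obstacle here; the only point demanding a little care is the justification that ``no embedded prime'' implies $I^{(1)} = I$ — one must make sure this works for whichever of the two definitions of symbolic power from Definition \ref{def-symbolic} is in force, but since the two definitions coincide precisely when $\mathrm{Ass}(R/I) = \mathrm{Min}(I)$, this is not an issue. I would phrase the argument so that it is manifestly definition-agnostic, consistent with the paper's standing convention that its results hold for both definitions.
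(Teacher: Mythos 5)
Your proposal is correct and follows essentially the same route as the paper: both note that the absence of embedded primes forces $I^{(1)}=I$ (making the two definitions of symbolic power coincide) and then apply Theorem \ref{thm_vsym_ub}. Your extra justification via primary decomposition is a fuller version of what the paper leaves implicit.
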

\begin{proof}
    Note that if $I$ has no embedded prime, then both the definition of symbolic powers given in Definition \ref{def-symbolic} are the same, and so, $I^{(1)}=I$. Hence, the result follows from Theorem \ref{thm_vsym_ub}.
\end{proof}

\begin{corollary}\label{corsqfreeub}
     Let $I$ be a square-free monomial ideal in $R=\mathbb{K}[x_1,\ldots,x_n]$. Then
    $\v(I^{(k)})\leq (k-1)d(I)+\v(I)$, for all $k\geq 1$.
\end{corollary}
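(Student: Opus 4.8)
The plan is to obtain Corollary \ref{corsqfreeub} as a direct specialization of Theorem \ref{thm_vsym_ub}. For the polynomial ring $R=\mathbb{K}[x_1,\dots,x_n]$, which is an $\mathbb{N}$-graded Noetherian domain, Theorem \ref{thm_vsym_ub} gives $\v(I^{(k)})\le (k-1)d(I^{(1)})+\v(I^{(1)})$ for any graded ideal $I$ with the symbolic strong persistence property. Thus it suffices to check two things for an arbitrary square-free monomial ideal $I$: that $I^{(1)}=I$ (so that $d(I^{(1)})=d(I)$ and $\v(I^{(1)})=\v(I)$), and that $I$ has the symbolic strong persistence property.

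The first point is immediate: a square-free monomial ideal is radical, hence $\mathrm{Ass}(R/I)=\mathrm{Min}(I)$, so $I$ has no embedded primes and the two definitions of symbolic power in Definition \ref{def-symbolic} coincide; in particular $I^{(1)}=I$.

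For the second point I would argue as follows. Writing $I=\bigcap_{\mathfrak{p}\in\mathrm{Min}(I)}\mathfrak{p}$ with each $\mathfrak{p}$ a monomial prime $(x_i:i\in S_{\mathfrak{p}})$, minimality forces $IR_{\mathfrak{p}}=\mathfrak{p}R_{\mathfrak{p}}$, and since powers of monomial primes are primary one has $\mathfrak{p}^kR_{\mathfrak{p}}\cap R=\mathfrak{p}^k$; hence $I^{(k)}=\bigcap_{\mathfrak{p}\in\mathrm{Min}(I)}\mathfrak{p}^k$. The inclusion $I^{(k-1)}\subseteq(I^{(k)}:I)$ is the usual persistence statement. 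For the reverse inclusion, fix $\mathfrak{p}\in\mathrm{Min}(I)$ and $g\in(I^{(k)}:I)$; localizing, $g\cdot\mathfrak{p}R_{\mathfrak{p}}=g\cdot IR_{\mathfrak{p}}\subseteq\mathfrak{p}^kR_{\mathfrak{p}}$, and the elementary monomial identity $\mathfrak{p}^k:\mathfrak{p}=\mathfrak{p}^{k-1}$ (which follows from $\mathfrak{p}^k:x_i=\mathfrak{p}^{k-1}$ for each $x_i\in\mathfrak{p}$) gives $g\in\mathfrak{p}^{k-1}R_{\mathfrak{p}}\cap R=\mathfrak{p}^{k-1}$. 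Intersecting over all minimal primes yields $g\in I^{(k-1)}$, so $(I^{(k)}:I^{(1)})=(I^{(k)}:I)=I^{(k-1)}$, which is the symbolic strong persistence property. (Alternatively, one may cite the known fact that square-free monomial ideals have this property.) Combining with Theorem \ref{thm_vsym_ub} and $I^{(1)}=I$ finishes the proof.

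I do not foresee a real obstacle: the only mildly technical steps are the identity $\mathfrak{p}^k:\mathfrak{p}=\mathfrak{p}^{k-1}$ for a monomial prime and the reduction of the colon ideal $(I^{(k)}:I)$ to local computations at the minimal primes, both of which are routine for monomial ideals. All of the substantive work already sits in Theorem \ref{thm_vsym_ub}; Corollary \ref{corsqfreeub} is merely its incarnation in the square-free monomial setting, where symbolic powers are most transparent.
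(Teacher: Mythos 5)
Your proposal is correct and follows the same route as the paper: both deduce the corollary from Theorem \ref{thm_vsym_ub} together with the symbolic strong persistence property of square-free monomial ideals (and the observation that $I^{(1)}=I$ since such ideals have no embedded primes). The only difference is that the paper simply cites \cite[Theorem 5.1]{nkrt22} for $(I^{(k)}:I)=I^{(k-1)}$, whereas you supply a correct self-contained verification via localization at the minimal primes and the identity $\mathfrak{p}^k:\mathfrak{p}=\mathfrak{p}^{k-1}$ for monomial primes.
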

\begin{proof}
For any square-free monomial ideal $I$, \cite[Theorem 5.1]{nkrt22} establishes that $(I^{(k)} : I) = I^{(k-1)}$ holds for all $k \geq 1$. Thus, the result follows from Theorem \ref{thm_vsym_ub}.
\end{proof}

\begin{remark}{\rm
Let $I$ be a graded ideal in a $\mathbb{N}$-graded Noetherian domain $R$. If $I$ has the strong persistence property, then $\v(I^k)\leq (k-1)d(I)+\v(I)$ for all $k\geq 1$. The proof is similar to the proof of Theorem \ref{thm_vsym_ub}.
}
\end{remark}

\begin{remark}\label{remvsymb} {\rm
    Let $I\subseteq R$ be a square-free monomial ideal. Then by \cite[Proposition 2.2]{f23}, we know $\v(I)\geq \alpha(I)-1$. There are many instances where equality holds (see \cite{bms24}, \cite{f23}). Since $d(I)\geq \alpha(I)$, there are several square-free monomial ideal $I$ with $\v(I)\leq d(I)-1$. Now, by \cite[Lemma 4.2(ii)]{dhnt21}, we have $d(I)k\leq d(I^{(k)})$ for all $k\geq 1$. Hence, $\reg(R/I^{(k)})\geq kd(I)-1$ for all $k\geq 1$. Thus, as a consequence of Corollary \ref{corsqfreeub}, it follows that if $\v(I)\leq d(I)-1$, then $\v(I^{(k)})\leq kd(I)-1\leq \reg(R/I^{(k)}) \text{ for all } k\geq 1$.
    }
\end{remark}

\begin{definition}{\rm
    A polymatroidal ideal $I$ is a monomial ideal, generated in a single degree, satisfying the following ``exchange conditions": For two minimal monomial generators $u = x_1^{a_1} \cdots x_n^{a_n}$ and $v = x_1^{b_1} \cdots x_n^{b_n}$ of $I$, and for each $i$ with $a_i > b_i$, there exists $j$ such that $a_j < b_j$ and $\frac{x_j u}{x_i}$ is a minimal generator of $I$.
    }
\end{definition}

\begin{proposition}\label{prop_poly}
Let $I$ be a square-free polymatroidal ideal generated in degree $d$. Then 
\begin{align*}
    \v(I^{(k)}) \leq \v(I^k) =dk-1= \reg(R/I^k)\leq \reg(R/I^{(k)}) \text{ for all } k\geq 1.
\end{align*}
Moreover, if $I^{(k)}$ has a linear resolution for some $k\geq 2$, then 
\begin{align*}
    \v(I^{(k)}) = \v(I^k) =dk-1= \reg(R/I^k)= \reg(R/I^{(k)}) \text{ for all } k\geq 1.
\end{align*}
\end{proposition}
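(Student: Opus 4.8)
The plan is to reduce everything to known facts about powers and symbolic powers of polymatroidal ideals. First I would recall (Herzog--Hibi) that a polymatroidal ideal $I$ has the strong persistence property, so by the final remark preceding this proposition, $\v(I^k)\leq (k-1)d(I)+\v(I)=(k-1)d+\v(I)$ for all $k\geq 1$. Since a polymatroidal ideal is generated in a single degree $d$, we have $\alpha(I)=d=d(I)$; combined with $\v(I)\geq \alpha(I)-1$ for square-free monomial ideals (Remark \ref{remvsymb}, via \cite[Proposition 2.2]{f23}) this pins down $\v(I)=d-1$ once I know $\v(I)\leq d-1$. For the latter: powers of a polymatroidal ideal are again polymatroidal, hence have linear resolution, so $\reg(R/I^k)=dk-1$ for all $k\geq1$; in particular $\reg(R/I)=d-1$. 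Now invoke $\v(I)\leq\reg(R/I)$ for (square-free) monomial ideals having linear resolution — or more directly, since $I$ is equigenerated in degree $d$, one has $\v(I)\leq d-1=\reg(R/I)$. This gives $\v(I)=d-1$, and then the strong-persistence bound reads $\v(I^k)\leq (k-1)d+(d-1)=dk-1$. Together with the lower bound $\v(I^k)\geq\alpha(I^k)-1=dk-1$ (Remark \ref{remvsymb} applied to the square-free monomial ideal $I^k$, which is equigenerated in degree $dk$), we conclude $\v(I^k)=dk-1=\reg(R/I^k)$.

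Next I would handle the symbolic side. By Corollary \ref{corsqfreeub}, $\v(I^{(k)})\leq (k-1)d(I)+\v(I)=(k-1)d+(d-1)=dk-1$, which gives $\v(I^{(k)})\leq\v(I^k)$. For the regularity comparison $\reg(R/I^k)\leq\reg(R/I^{(k)})$: since $I^k\subseteq I^{(k)}$ and, for square-free monomial ideals, $I^{(k)}$ has no embedded primes while the associated primes of $I^k$ all contain a minimal prime, one can use the standard fact that $d(I)k\leq d(I^{(k)})$ (Remark \ref{remvsymb}, from \cite[Lemma 4.2]{dhnt21}), hence $\reg(R/I^{(k)})\geq d(I^{(k)})-1\geq dk-1=\reg(R/I^k)$. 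Chaining all these inequalities yields the first displayed line:
\begin{align*}
\v(I^{(k)})\leq\v(I^k)=dk-1=\reg(R/I^k)\leq\reg(R/I^{(k)}).
\end{align*}

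For the "moreover" part, suppose $I^{(k_0)}$ has a linear resolution for some $k_0\geq 2$. A linear resolution of an equigenerated ideal forces $\reg(R/I^{(k_0)})=d(I^{(k_0)})-1$; but we also know $d(I^{(k_0)})\geq dk_0$, and since $I^{k_0}\subseteq I^{(k_0)}$ forces $d(I^{(k_0)})\leq$ (the top degree of a generator of $I^{(k_0)}$) — actually I need $\reg(R/I^{(k_0)})=dk_0-1$, which I would get by squeezing $dk_0-1=\reg(R/I^{k_0})\leq\reg(R/I^{(k_0)})=d(I^{(k_0)})-1$ together with an upper bound $d(I^{(k_0)})\leq dk_0$. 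That upper bound is where I expect the only real subtlety: I would argue that for a polymatroidal ideal the symbolic and ordinary powers coincide, i.e. $I^{(k)}=I^k$ for all $k$ — this is a known theorem (polymatroidal, indeed transversal/matroidal in the square-free case, ideals are "normally torsion-free" / satisfy $I^{(k)}=I^k$). Granting $I^{(k)}=I^k$, the "moreover" conclusion is immediate: $\v(I^{(k)})=\v(I^k)=dk-1=\reg(R/I^k)=\reg(R/I^{(k)})$ for all $k\geq1$, with no need for the linear-resolution hypothesis at all — but since the proposition states it under that hypothesis, I would instead present the argument that the linear resolution of $I^{(k_0)}$ plus $I^{k_0}\subseteq I^{(k_0)}$ plus equality of Waldschmidt-type degree data forces $I^{(k_0)}=I^{k_0}$ and hence (by the equality case of symbolic containment for square-free monomial ideals, or by comparing Hilbert functions) $I^{(k)}=I^k$ for all $k$. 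The main obstacle is thus step 3: cleanly extracting from "$I^{(k_0)}$ has a linear resolution" the identity $d(I^{(k_0)})=dk_0$ and propagating the resulting equality downward and upward in $k$; everything else is a bookkeeping assembly of inequalities already available in the paper.
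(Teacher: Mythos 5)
Your assembly of the first chain of inequalities follows the paper's skeleton (upper bound for $\v(I^{(k)})$ via Corollary \ref{corsqfreeub}, lower bound for $\reg(R/I^{(k)})$ via $d(I^{(k)})\geq dk$), but the step where you pin down $\v(I)=d-1$ has a genuine gap. Being equigenerated in degree $d$ does \emph{not} imply $\v(I)\leq d-1$: already for edge ideals (square-free, equigenerated in degree $2$) one frequently has $\v(I(G))\geq 2$, and Civan's examples even violate $\v(I)\leq \reg(R/I)$. The fallback you offer, ``$\v(I)\leq \reg(R/I)$ for monomial ideals with linear resolution,'' is not an available general fact either --- it is essentially the content of an open conjecture. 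The paper simply quotes Ficarra's theorem \cite[Theorem 5.5]{f23}, which asserts $\v(I^k)=dk-1$ for all $k\geq 1$ for square-free polymatroidal ideals; that result requires a genuine argument using the exchange property and cannot be replaced by the degree bookkeeping you propose.

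The ``moreover'' part rests on a false claim: square-free polymatroidal (matroidal) ideals do \emph{not} in general satisfy $I^{(k)}=I^k$. For $I=(xy,yz,xz)=(x,y)\cap(y,z)\cap(x,z)$ (the uniform matroid $U_{2,3}$) one has $xyz\in I^{(2)}\setminus I^2$, so these ideals are not normally torsion-free. Consequently your plan of deducing the second display from $I^{(k)}=I^k$ collapses, and the auxiliary route (forcing $I^{(k_0)}=I^{k_0}$ from the linear resolution and then propagating) is not substantiated. The paper's argument avoids this entirely: by \cite[Theorem 4.7]{mt19}, a linear resolution of $I^{(k)}$ for one $k\geq 2$ forces a linear resolution for all $k\geq 1$; then $I^{(k)}$ is equigenerated with $dk\leq d(I^{(k)})=\alpha(I^{(k)})\leq \alpha(I^k)=dk$, so $\reg(R/I^{(k)})=dk-1$; finally the matching lower bound $\v(I^{(k)})\geq \alpha(I^{(k)})-1=dk-1$ comes from Lemma \ref{stable prime} with $c=1$ (associated primes of monomial ideals are generated by variables), a step absent from your proposal. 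You need both of these ingredients --- the Minh--Trung propagation and the $\v\geq\alpha-1$ lower bound --- to close the equality case.
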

\begin{proof}
By \cite[Theorem 5.5]{f23}, $\v(I^k)=dk-1$ for all $k\geq 1$. Thus, by Corollary \ref{corsqfreeub}, we have $\v(I^{(k)})\leq dk-1$ for all $k\geq 1$. Again, as stated in Remark \ref{remvsymb}, $\reg(R/I^{(k)})\geq dk-1$ for all $k\geq 1$. It is well-known that polymatroidal ideals have linear powers. Hence, the first part of the result follows.\par 
Now, suppose $I^{(k)}$ has a linear resolution for some $k\geq 2$. Then, by \cite[Theorem 4.7]{mt19}, $I^{(k)}$ has a linear resolution for all $k\geq 1$. Thus, $\reg(R/I^{(k)})=dk-1$ and due to \cite[Lemma 4.2(ii)]{dhnt21} $\alpha(I^{(k)})=dk$. Since associated primes of a monomial ideal are generated by variables, then by the notation of the Lemma \ref{stable prime}, we get $c=1$ and $\mathrm{v}(I^{(k)})\geq dk-1$ for all $k\geq 1$. Hence, the second part follows.
\end{proof}

Next, we give more general conditions for which $\v(I^{(k)})\leq \reg(R/I^{(k)})$ and $\v(I^{k})\leq \reg(R/I^{k})$ hold for all $k\geq 1$.

\begin{proposition}\label{propvimreg}
    Let $I=I(\mathcal{H})$ be a square-free monomial ideal in $R=\mathbb{K}[x_1,\ldots,x_n]$, where $\mathcal{H}$ is the corresponding hypergraph of $I$. Let $E_1, \ldots, E_s$ be an induced matching of $\mathcal{H}$ with $\vert E_1\vert=d(I)$. If $\v(I) \leq \sum_{i=1}^{s}(\vert E_i\vert -1)$, then $\v(I^{(k)}) \leq \reg(R/I^{(k)})$ for all $k \geq 1$. Moreover, if $I$ has the strong persistence property, then $\v(I^{k}) \leq \reg(R/I^{k})$ for all $k \geq 1$ provided $\v(I) \leq \sum_{i=1}^{s}(\vert E_i\vert -1)$.
    \end{proposition}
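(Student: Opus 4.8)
The plan is to combine a known lower bound on $\reg(R/I^{(k)})$ coming from induced matchings with the linear upper bounds on $\v(I^{(k)})$ (respectively $\v(I^k)$) already at our disposal. First I would recall that for a square-free monomial ideal $I = I(\mathcal{H})$, an induced matching $E_1,\dots,E_s$ of the hypergraph $\mathcal{H}$ forces $\sum_{i=1}^s(|E_i|-1) \leq \reg(R/I)$; this is the hypergraph analogue of the Katzman/Hà–Van Tuyl-type bound (the edges $E_1,\dots,E_s$ yield a subcomplex contributing to a nonvanishing Betti number of $R/I$, or equivalently a nonzero reduced homology of an induced subcomplex of the Stanley–Reisner complex). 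More importantly, the same induced matching in $\mathcal{H}$ gives an induced matching of the hypergraph of $I^{(k)}$ after appropriate scaling: a standard computation with symbolic powers of monomial ideals shows $\sum_{i=1}^{s}\bigl((|E_i|-1) + (k-1)(|E_1|-1)\bigr) \leq \reg(R/I^{(k)})$, since taking $(k-1)$ extra ``copies'' of the largest edge $E_1$ (which has size $d(I)$) inside the induced subhypergraph corresponding to the support of the matching still produces an induced structure in $I^{(k)}$. Hence
\begin{align*}
\reg(R/I^{(k)}) \;\geq\; (k-1)(d(I)-1) + \sum_{i=1}^{s}(|E_i|-1), \quad\text{for all } k\geq 1.
\end{align*}

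Next I would invoke Corollary \ref{corsqfreeub}, which gives $\v(I^{(k)}) \leq (k-1)d(I) + \v(I)$ for all $k\geq 1$. Combining this with the regularity lower bound above, it suffices to check
\begin{align*}
(k-1)d(I) + \v(I) \;\leq\; (k-1)(d(I)-1) + \sum_{i=1}^{s}(|E_i|-1),
\end{align*}
i.e. $\v(I) \leq \sum_{i=1}^{s}(|E_i|-1) - (k-1)$. Wait --- this only works for $k=1$ unless I am more careful. The correct accounting is to note that $d(I^{(k)}) \geq k\,d(I)$ (by \cite[Lemma 4.2(ii)]{dhnt21}), so in fact the induced-matching bound for $I^{(k)}$ should be read with the largest edge having size $\geq k\,d(I)$, giving $\reg(R/I^{(k)}) \geq (k\,d(I)-1) + \sum_{i=2}^{s}(|E_i|-1) = (k-1)d(I) + \sum_{i=1}^{s}(|E_i|-1)$. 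With this, $\v(I^{(k)}) \leq (k-1)d(I)+\v(I) \leq (k-1)d(I) + \sum_{i=1}^s(|E_i|-1) \leq \reg(R/I^{(k)})$, as desired.

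For the ``moreover'' part, the argument is parallel: if $I$ has the strong persistence property, then the remark following Corollary \ref{corsqfreeub} gives $\v(I^k) \leq (k-1)d(I) + \v(I)$ for all $k\geq 1$, and since $d(I^k) = k\,d(I)$, the same induced matching $E_1,\dots,E_s$ (with $E_1$ replaced by a $k$-fold ``thickening'' inside the support) yields $\reg(R/I^k) \geq (k-1)d(I) + \sum_{i=1}^s(|E_i|-1)$; the hypothesis $\v(I) \leq \sum_{i=1}^s(|E_i|-1)$ then closes the chain of inequalities.

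The main obstacle I anticipate is making the induced-matching lower bound for symbolic (and ordinary) powers fully rigorous: one must exhibit, inside the hypergraph of $I^{(k)}$, an induced structure whose associated contribution to regularity is $(k-1)d(I) + \sum_{i=1}^s(|E_i|-1)$, which requires carefully replacing the largest edge $E_1$ by a suitable ``scaled'' generator of degree $k\,d(I)$ supported on the same (or a slightly enlarged) vertex set and verifying that induced-ness is preserved; alternatively, one can lift this to a statement about polarization and reduced homology of induced subcomplexes. Everything else is bookkeeping with the already-established linear bounds.
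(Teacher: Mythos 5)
Your proposal follows essentially the same route as the paper: the chain $\v(I^{(k)}) \leq (k-1)d(I)+\v(I) \leq (k-1)d(I)+\sum_{i=1}^{s}(|E_i|-1) \leq \reg(R/I^{(k)})$ is exactly the paper's argument, and the regularity lower bound you sketch (and flag as the remaining obstacle) is precisely \cite[Theorem 3.7]{bcdms22}, which the paper simply cites rather than re-derives. That citation (together with Corollary \ref{corsqfreeub} and the remark following it) also covers the ordinary-power case in the ``moreover'' part, so no further work is needed there.
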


\begin{proof}
    By \cite[Theorem 3.7]{bcdms22}, we have $kd(I)-1+\sum_{i=2}^{s}(\vert E_i\vert-1)\leq \reg(R/I^{(k)})$. Now, by the given condition, $$kd(I)-1+\sum_{i=2}^{s}(\vert E_i\vert-1)=(k-1)d(I)+\sum_{i=1}^{s}(\vert E_i\vert -1)\geq (k-1)d(I)+\v(I).$$
    Hence, the assertion follows from Corollary \ref{corsqfreeub}.
\end{proof}

\begin{corollary}\label{uniform-hypergraph-matching}
Let $\mathcal{H}$ be a $d$-uniform hypergraph and $I=I(\mathcal{H})$ be such that $\v(I) \leq \im(\mathcal{H})(d-1)$. Then $\v(I^{(k)}) \leq \reg(R/I^{(k)})$ for all $k \geq 1$.
\end{corollary}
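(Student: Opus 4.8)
The plan is to recognize Corollary \ref{uniform-hypergraph-matching} as the immediate specialization of Proposition \ref{propvimreg} to the $d$-uniform situation, so the work is almost entirely bookkeeping. First I would observe that if $\mathcal{H}$ is $d$-uniform, then every edge $e \in E(\mathcal{H})$ contributes the square-free monomial $X_e$ of degree exactly $d$; since the $X_e$ with $e\in E(\mathcal{H})$ form a minimal monomial generating set of $I=I(\mathcal{H})$, all minimal generators of $I$ have degree $d$, and hence $d(I)=d$ (in fact $\alpha(I)=d$ as well).

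Next I would bring in the induced matching number: choose an induced matching $E_1,\ldots,E_s$ of $\mathcal{H}$ of maximum size, so that $s=\im(\mathcal{H})$. By $d$-uniformity each $\vert E_i\vert=d$, and in particular $\vert E_1\vert=d=d(I)$, which is exactly the structural requirement needed to apply Proposition \ref{propvimreg}. Moreover $\sum_{i=1}^{s}(\vert E_i\vert-1)=s(d-1)=\im(\mathcal{H})(d-1)$, so the hypothesis $\v(I)\le \im(\mathcal{H})(d-1)$ is precisely the inequality $\v(I)\le \sum_{i=1}^{s}(\vert E_i\vert-1)$ demanded by Proposition \ref{propvimreg}. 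Invoking that proposition then yields $\v(I^{(k)})\le \reg(R/I^{(k)})$ for all $k\ge 1$, which is the claim.

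There is essentially no obstacle beyond this translation; the only degenerate case to dispatch is $E(\mathcal{H})=\emptyset$, where $I=(0)$ and the statement is vacuous (equivalently, one assumes $\mathcal{H}$ has at least one edge, so that $\im(\mathcal{H})\ge 1$). The genuine content sits upstream, in Proposition \ref{propvimreg}, whose proof combines the regularity lower bound $kd(I)-1+\sum_{i=2}^{s}(\vert E_i\vert-1)\le \reg(R/I^{(k)})$ with the linear $\v$-number upper bound $\v(I^{(k)})\le (k-1)d(I)+\v(I)$ from Corollary \ref{corsqfreeub}; I would simply cite those rather than reprove them.
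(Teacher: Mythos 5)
Your proposal is correct and follows exactly the paper's route: the paper likewise notes that $d$-uniformity gives $d(I)=d$ (so every edge of a maximum induced matching has size $d$ and $\sum_{i=1}^{s}(\vert E_i\vert-1)=\im(\mathcal{H})(d-1)$) and then invokes Proposition \ref{propvimreg}. Your write-up just spells out the bookkeeping more explicitly than the paper's one-line proof.
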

\begin{proof}
Since $\mathcal{H}$ is $d$-uniform, $d(I)=d$, and the result holds due to Proposition \ref{propvimreg}.
\end{proof}

\begin{corollary}\label{corvregedge}
    Let $G$ be a simple graph. Then $\v(I(G)^{k})\leq 2(k-1)+\v(I(G))$ and $\v(I(G)^{(k)})\leq 2(k-1)+\v(I(G))$ for all $k\geq 1$. If $\v(I(G))\leq \im(G)$, then $\v(I(G)^k)\leq \reg(R/I(G)^k)$ and $\v(I(G)^{(k)})\leq \reg(R/I(G)^{(k)})$ for all $k\geq 1$.
\end{corollary}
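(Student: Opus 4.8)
The plan is to deduce Corollary \ref{corvregedge} directly from the results already assembled for hypergraphs, specializing to the case where $\mathcal{H}$ is the graph $G$ itself, so that $I = I(G)$ is generated in degree $d(I) = 2$ and $\mathcal{H}$ is $2$-uniform. First I would handle the two linear upper bounds: since $I(G)$ is a square-free monomial ideal, Corollary \ref{corsqfreeub} gives $\v(I(G)^{(k)}) \leq (k-1)d(I(G)) + \v(I(G)) = 2(k-1) + \v(I(G))$ for all $k \geq 1$; and since simple-graph edge ideals are well-known to satisfy the strong persistence property (indeed, $(I(G)^k : I(G)) = I(G)^{k-1}$ for all $k$), the Remark following Corollary \ref{corsqfreeub} yields $\v(I(G)^k) \leq 2(k-1) + \v(I(G))$ for all $k \geq 1$.

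Next I would invoke the hypothesis $\v(I(G)) \leq \im(G)$. Observing that $G$, viewed as its own associated $2$-uniform hypergraph $\mathcal{H}$, has $d = 2$, the hypothesis reads $\v(I(G)) \leq \im(G)(d-1)$. So for the symbolic powers, Corollary \ref{uniform-hypergraph-matching} applies verbatim and gives $\v(I(G)^{(k)}) \leq \reg(R/I(G)^{(k)})$ for all $k \geq 1$. For the ordinary powers, I would instead appeal to the "moreover" clause of Proposition \ref{propvimreg}: taking $E_1, \ldots, E_s$ to be a maximum induced matching of $G$ (so $s = \im(G)$ and each $|E_i| = 2 = d(I)$, in particular $|E_1| = d(I)$), the hypothesis $\v(I(G)) \leq \im(G) = \sum_{i=1}^{s}(|E_i| - 1)$ is exactly what is required, and since $I(G)$ has the strong persistence property we conclude $\v(I(G)^k) \leq \reg(R/I(G)^k)$ for all $k \geq 1$.

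There is essentially no real obstacle here; the corollary is a straightforward specialization. The only point requiring a word of care is the justification that $I(G)$ has the strong persistence property, i.e. $(I(G)^{k+1} : I(G)) = I(G)^k$ for all $k$ — this is standard for monomial ideals generated by monomials forming the edges of a graph (it holds, for instance, for all normally torsion-free ideals, and edge ideals of graphs are covered by classical results on the integral closure of powers of edge ideals), so I would simply cite it. One should also make sure the inequality in Proposition \ref{propvimreg} is being applied with the correct indexing: the condition $|E_1| = d(I)$ is automatic in the $2$-uniform case since every edge has size $2$, so any choice of a maximum induced matching works, and the bound $\sum_{i=1}^{s}(|E_i|-1) = s = \im(G)$ matches the hypothesis on the nose.
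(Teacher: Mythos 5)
Your proof is correct and follows essentially the same route as the paper, which deduces the corollary from Proposition \ref{propvimreg} (whose ``moreover'' clause covers the ordinary powers) together with the strong persistence property of $I(G)$, citing \cite[Lemma 2.12]{mmv12}. One minor caution: your parenthetical suggestion that strong persistence follows because edge ideals are normally torsion-free is inaccurate for non-bipartite graphs (only bipartite edge ideals are normally torsion-free), but since the fact itself is standard and you say you would cite it, this does not affect the argument.
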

\begin{proof}
    The proof follows from Proposition \ref{propvimreg} and the fact that $I(G)$ has the strong persistence property (see \cite[Lemma 2.12]{mmv12}).
\end{proof}

\begin{remark}\label{bipartite-chordal}{\rm
 By \cite[Theorem 4.5, Theorem 4.10, Theorem 4.11, Theorem 4.12]{ss22}, if $G$ is bipartite graphs, or chordal graphs, or $(C_4, C_5)$-free vertex decomposable graphs, or whiskers graphs, then we have $\v(I(G)) \leq \im(G)$. Also, by \cite[Corollary 4, Theorem 12, Theorem 13, Theorem 14]{grv21}, if $G$ is very well-covered, or $G$ has a simplicial partition, or $G$ is well-covered connected and contains neither four nor five cycles, or $G$ is a cycle $C_n$ with $n \neq 5$, then $\v(I(G)) \leq \im(G)$.  Thus, by Corollary \ref{corvregedge}, for these classes of graphs, $\v(I(G)^{(k)}) \leq  \reg(R/I(G)^{(k)})$ and $\v(I(G)^{k}) \leq  \reg(R/I(G)^{k})$ for all $k \geq 1$.
     }
\end{remark}

\section{The $\v$-number and regularity of symbolic powers of cover ideals} \label{cover-ideal}
This section compares the $\v$-number and regularity of symbolic powers of cover ideals of simple graphs. Note that using the previous section's result, it is not possible to compare $\v$-number and regularity of cover ideals of all graphs. Thus, we use \cite[Theorem 3.8.]{s23} and Seyed Fakhari's construction \cite{f18} to prove our main results. Let us start with Seyed Fakhari's construction. 
\medskip

\noindent \textbf{Construction.} Let $G$ be a graph with vertex set $V(G) = \{x_1, \ldots, x_n\}$. For an integer $k \geq 1$, let us define a new graph $G_k$ as follows:
\begin{enumerate}
    \item[$\bullet$] $V(G_k) = \{x_{i,p} \mid 1 \leq i \leq n \text{ and } 1 \leq p \leq k\}$,

    \item[$\bullet$] $E(G_k) = \{\{x_{i,p}, x_{j,q}\} \mid \{x_i, x_j\} \in E(G) \text{ and } p + q \leq k + 1\}$.
\end{enumerate}
\medskip

We start by stating the following two lemmas that will be used to prove the main results of this section.

\begin{lemma}\cite[Lemma 3.4]{f18} \label{polarisation}
  Let $G$ be a graph. For every integer $k \geq 1$, the ideal $(J(G)^{(k)})^{\PP}$ is the cover ideal of $G_k$.  
\end{lemma}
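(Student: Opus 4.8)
The plan is to reduce the identity $(J(G)^{(k)})^{\PP}=J(G_k)$ to an elementary statement about vertex covers, and then verify that statement directly.

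First I would record the monomial description of $J(G)^{(k)}$. Since $J(G)$ is a squarefree monomial ideal, $\mathrm{Ass}(R/J(G))=\mathrm{Min}(J(G))=\{(x_i,x_j)\mid \{x_i,x_j\}\in E(G)\}$, and for a prime $P$ generated by variables one has $P^kR_P\cap R=P^k$; hence, under either form of Definition~\ref{def-symbolic},
\[
J(G)^{(k)}=\bigcap_{\{x_i,x_j\}\in E(G)}(x_i,x_j)^k .
\]
Thus a monomial $x_1^{c_1}\cdots x_n^{c_n}$ lies in $J(G)^{(k)}$ iff $c_i+c_j\ge k$ for every $\{x_i,x_j\}\in E(G)$; call such a $\mathbf c=(c_1,\dots,c_n)\in\mathbb{N}^n$ a \emph{$k$-cover of $G$}. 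The set of $k$-covers is closed under coordinatewise increase, so the monomials of $J(G)^{(k)}$ are exactly the $x^{\mathbf c}$ with $\mathbf c$ a $k$-cover, its minimal generators are the $x^{\mathbf c}$ with $\mathbf c$ coordinatewise minimal among $k$-covers, and no minimal generator involves a variable to a power exceeding $k$. Moreover, for a non-isolated vertex $x_i$ the exponent $k$ is attained: if $x_j$ is a neighbour of $x_i$ and $B\supseteq\{x_j\}$ is a maximal independent set (so $x_i\notin B$), then $c_v:=k$ for $v\notin B$ and $c_v:=0$ for $v\in B$ is a minimal $k$-cover with $c_i=k$ (it is a $k$-cover since $B$ is independent, and it is minimal since every $v\notin B$ has a neighbour in $B$). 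Consequently $(J(G)^{(k)})^{\PP}$ lives in the polynomial ring on the variables $\{x_{i,p}\mid x_i\text{ non-isolated},\ 1\le p\le k\}$, i.e.\ on $V(G_k)$ (isolated vertices of $G$ are isolated in $G_k$ as well, so we may assume $G$ has none).

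Next, both $(J(G)^{(k)})^{\PP}$ and $J(G_k)$ are squarefree monomial ideals, so it suffices to show they contain the same squarefree monomials $X_S=\prod_{x_{i,p}\in S}x_{i,p}$ for $S\subseteq V(G_k)$. On one hand $X_S\in J(G_k)$ iff $S$ is a vertex cover of $G_k$. On the other hand, since $\PP(x^{\mathbf c})=\prod_{i}x_{i,1}x_{i,2}\cdots x_{i,c_i}$ has support the initial-segment set $\Delta(\mathbf c):=\{x_{i,p}\mid 1\le p\le c_i\}$, and minimal $k$-covers are cofinal below all $k$-covers, we get $X_S\in (J(G)^{(k)})^{\PP}$ iff $\Delta(\mathbf c)\subseteq S$ for some $k$-cover $\mathbf c$. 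Setting $\gamma_i(S):=\max\{p\ge 0\mid x_{i,1},\dots,x_{i,p}\in S\}$, the condition $\Delta(\mathbf c)\subseteq S$ is equivalent to $\mathbf c\le \gamma(S)$ coordinatewise, and, since $k$-covers are upward closed, ``$\Delta(\mathbf c)\subseteq S$ for some $k$-cover $\mathbf c$'' collapses to ``$\gamma(S)$ is a $k$-cover of $G$.'' So the lemma becomes the equivalence
\[
S\ \text{is a vertex cover of}\ G_k \iff \gamma(S)\ \text{is a}\ k\text{-cover of}\ G .
\]
To finish I would check this using $E(G_k)=\{\{x_{i,p},x_{j,q}\}\mid \{x_i,x_j\}\in E(G),\ p+q\le k+1\}$: if $\gamma(S)$ is a $k$-cover and $\{x_{i,p},x_{j,q}\}\in E(G_k)$ with $x_{i,p}\notin S$, then $p\ge\gamma_i(S)+1$, so $q\le k+1-p\le k-\gamma_i(S)\le\gamma_j(S)$, whence $x_{j,q}\in S$; conversely, if $\gamma_i(S)+\gamma_j(S)\le k-1$ for some $\{x_i,x_j\}\in E(G)$, then $x_{i,\gamma_i(S)+1}$ and $x_{j,\gamma_j(S)+1}$ lie outside $S$ while $(\gamma_i(S)+1)+(\gamma_j(S)+1)\le k+1$, so $\{x_{i,\gamma_i(S)+1},x_{j,\gamma_j(S)+1}\}$ is an edge of $G_k$ uncovered by $S$.

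The step needing the most care is the translation in the previous paragraph: one must track the \emph{initial segment} $\gamma_i(S)$ — the longest run $x_{i,1},\dots,x_{i,p}$ contained in $S$ — rather than $|S\cap\{x_{i,1},\dots,x_{i,k}\}|$, since it is precisely the initial-segment shape of supports of polarized monomials that makes ``$\Delta(\mathbf c)\subseteq S$ for some $k$-cover'' equivalent to ``$\gamma(S)$ is a $k$-cover.'' A smaller point is confirming that the polarization genuinely uses $k$ copies of each non-isolated variable, so that $(J(G)^{(k)})^{\PP}$ really lives on $V(G_k)$; the maximal-independent-set construction above settles this. (An alternative is to invoke the general fact that polarization commutes with intersections of monomial ideals computed in a common, sufficiently large variable scheme, reducing to identifying $((x_i,x_j)^k)^{\PP}$ with $\bigcap_{p+q\le k+1}(x_{i,p},x_{j,q})$; but the direct argument above avoids that machinery.)
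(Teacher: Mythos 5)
Your proposal is correct, and it is complete: the reduction of $J(G)^{(k)}=\bigcap_{\{x_i,x_j\}\in E(G)}(x_i,x_j)^k$ to $k$-covers, the verification that minimal $k$-covers use each non-isolated variable with exponent at most (and sometimes exactly) $k$, and the initial-segment dictionary $\gamma(S)$ between squarefree monomials of $(J(G)^{(k)})^{\PP}$ and vertex covers of $G_k$ all check out. Note that the paper itself offers no proof here — the lemma is quoted verbatim from \cite[Lemma~3.4]{f18} — and your argument is essentially the one given in that reference, so there is nothing to flag beyond the (harmless) unstated one-line justification that truncating a coordinate of a $k$-cover at $k$ preserves the $k$-cover property.
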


We say a graph $G$ is \textit{Cohen-Macaulay} if $R/I(G)$ is Cohen-Macaulay. $G$ is called \textit{Cohen-Macaulay very well-covered} if $G$ is both Cohen-Macaulay and very well-covered.

\begin{lemma}\cite[Proposition 3.1]{f18} \label{very-well}
Let $G$ be a graph without isolated vertices, and $k \geq 1$ be an integer. Then, the following holds.
\begin{enumerate}
    \item If $G$ is very well-covered, then $G_k$ is very well-covered too.
    \item If $G$ is Cohen-Macaulay and very well-covered, then $G_k$ is Cohen-Macaulay too.
\end{enumerate}
\end{lemma}

\begin{theorem}\label{v-numver-less-regularity}
 Let $G$ be a simple graph. Then,
 \begin{align*}
    \v(J(G)^{(k)}) \leq \reg(R/J(G)^{(k)}) \text{ for all } k \geq 1. 
 \end{align*}
\end{theorem}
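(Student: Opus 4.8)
The plan is to reduce the statement about symbolic powers of the cover ideal $J(G)$ to a statement about cover ideals of the auxiliary graphs $G_k$ from Seyed Fakhari's construction, where the comparison between $\v$-number and regularity is already understood for cover ideals via vertex cover / edge structure. Recall from Lemma \ref{polarisation} that $(J(G)^{(k)})^{\PP} = J(G_k)$. By Lemma \ref{embedded-primes}, polarization preserves the $\v$-number, so $\v(J(G)^{(k)}) = \v(J(G_k))$. Since polarization also preserves regularity (a standard fact: $\reg(R/I) = \reg(R^{\PP}/I^{\PP})$, available from \cite{hht07} or folklore), we have $\reg(R/J(G)^{(k)}) = \reg(R^{\PP}/J(G_k))$. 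Hence it suffices to prove the single-ideal inequality $\v(J(G')) \leq \reg(R'/J(G'))$ for every simple graph $G'$ (applied to $G' = G_k$).

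For this single-ideal inequality, I would invoke \cite[Theorem 3.8]{s23}, which (as signposted in the introduction to this section) gives an explicit description or lower bound for $\v(J(G'))$ — presumably $\v(J(G'))$ equals or is bounded above by some combinatorial quantity of $G'$, such as the minimum over maximal independent sets or a bigheight-type invariant. On the other side, the regularity $\reg(R'/J(G'))$ of a cover ideal is governed by the projective dimension of the edge ideal $I(G')$ via the Terai duality $\reg(R'/J(G')) = \pd(R'/I(G')) - 1$ (again a standard fact for square-free monomial ideals and their Alexander duals). So the task becomes: show that the combinatorial lower bound on $\v(J(G'))$ from \cite[Theorem 3.8]{s23} is at most $\pd(R'/I(G')) - 1$. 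I expect this to follow from a direct combinatorial comparison — for instance, bounding the relevant independent-set or cover quantity by the big height of $I(G')$ and then by projective dimension (using $\pd \geq \grade = \mathrm{bight}$ type inequalities for monomial ideals, or a minimal-vertex-cover counting argument).

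The main obstacle, I anticipate, is matching up the precise invariant appearing in \cite[Theorem 3.8]{s23} with a quantity that is provably $\leq \reg(R'/J(G'))$ for \emph{all} simple graphs, with no extra hypotheses. The $\v$-number of a cover ideal can be subtle because $J(G')$ typically has many associated primes (one for each minimal vertex cover, and the $\v$-number picks the "cheapest" witness), whereas regularity lower bounds via induced matchings are often too weak. The key will be to use the \emph{upper} bound side for $\v$ and a \emph{robust lower} bound for regularity that always dominates it; I'd look for an argument showing $\v(J(G')) \leq n - \mathrm{(size\ of\ a\ maximum\ independent\ set)} - 1 = \tau(G') - 1$ or similar, combined with $\reg(R'/J(G')) \geq \mathrm{bight}(I(G')) - 1$ and the inclusion of a maximum independent set complement among minimal covers. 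If \cite[Theorem 3.8]{s23} instead gives equality $\v(J(G')) = $ (some explicit number), the proof is cleaner: just verify that number is $\leq \pd(R'/I(G')) - 1$ by exhibiting enough of the minimal free resolution or by a localization/deletion-contraction induction on $G'$. Either way, the routine polarization bookkeeping is harmless; the real content is the inequality for a single cover ideal, which is why the authors set it up as Theorem \ref{v-numver-less-regularity} with its own proof using \cite[Theorem 3.8]{s23} and the $G_k$ construction.
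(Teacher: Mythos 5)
Your proposal is correct and follows essentially the same route as the paper: polarize $J(G)^{(k)}$ to $J(G_k)$ via Lemma \ref{polarisation}, use that polarization preserves both the $\v$-number (Lemma \ref{embedded-primes}) and the regularity (\cite[Corollary 1.6.3]{hh11}), and apply \cite[Theorem 3.8]{s23} to the cover ideal of $G_k$. The speculative second half of your write-up is unnecessary, since \cite[Theorem 3.8]{s23} already states exactly the single-graph inequality $\v(J(G'))\leq \reg(J(G'))-1$ for every simple graph $G'$, so no further combinatorial comparison is needed.
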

\begin{proof}
Note that $(J(G)^{(k)})^{\PP}=J(G_k)$ by Lemma \ref{polarisation}. Also, thanks to \cite[Theorem 3.8.]{s23}, we have  $\v(J(G_k)) \leq \reg(J(G_k))-1$. Again, due to Lemma \ref{embedded-primes}, we have
$$\v(J(G)^{(k)})=\v((J(G)^{(k)})^{\PP})=\v(J(G_k)).$$ 
Therefore, using all the above together and \cite[Corollary 1.6.3.]{hh11}, we get
$$\v(J(G)^{(k)})=\v(J(G_k)) \leq \reg(J(G_k))-1=\reg((J(G)^{(k)})^{\PP})-1=\reg(J(G)^{(k)})-1.$$
This implies that $\v(J(G)^{(k)}) \leq \reg(R/J(G)^{(k)})$ for all $k\geq 1$.
\end{proof}

\begin{theorem}\label{v-number-equal-regularity}
 Let $G$ be a simple graph. Then $G$ is a Cohen-Macaulay very-well covered graph if and only if  
 \begin{align*}
    \v(J(G)^{(k)})=\reg(R/J(G)^{(k)})= \alpha(J(G)^{(k)})-1 \text{ for all } k \geq 1.
 \end{align*}
\end{theorem}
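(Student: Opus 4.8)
The plan is to prove both implications by reducing, via polarization and Seyed Fakhari's construction, to the known characterization of graphs $G$ for which $\v(J(G)) = \reg(R/J(G)) = \alpha(J(G)) - 1$, namely the Cohen--Macaulay very well-covered graphs. The key facts I will assume are: Lemma \ref{polarisation} (so that $(J(G)^{(k)})^{\PP} = J(G_k)$), Lemma \ref{embedded-primes} (so $\v(J(G)^{(k)}) = \v(J(G_k))$), \cite[Corollary 1.6.3]{hh11} (so $\reg(J(G)^{(k)}) = \reg(J(G_k))$, whence $\reg(R/J(G)^{(k)}) = \reg(R/J(G_k))$), the invariance $\alpha(J(G)^{(k)}) = \alpha(J(G_k))$ under polarization, Lemma \ref{very-well}, and \cite[Theorem 3.8]{s23} together with the characterization result used there (Theorem \ref{v-numver-less-regularity}'s ingredient, plus the fact that for a graph $H$ one has $\v(J(H)) = \reg(R/J(H)) = \alpha(J(H)) - 1$ iff $H$ is Cohen--Macaulay very well-covered).

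First I would prove the forward direction. Suppose $G$ is Cohen--Macaulay very well-covered. Fix $k \geq 1$. By Lemma \ref{very-well}, $G_k$ is Cohen--Macaulay very well-covered as well. For such graphs, the chain of equalities $\v(J(G_k)) = \reg(R/J(G_k)) = \alpha(J(G_k)) - 1$ holds (this is the equality case of \cite[Theorem 3.8]{s23}, or can be assembled from the cited results on cover ideals of Cohen--Macaulay very well-covered graphs). Now transport each term back along polarization: $\v(J(G)^{(k)}) = \v(J(G_k))$ by Lemma \ref{embedded-primes} and Lemma \ref{polarisation}; $\reg(R/J(G)^{(k)}) = \reg(R/J(G_k))$ by \cite[Corollary 1.6.3]{hh11}; and $\alpha(J(G)^{(k)}) = \alpha(J(G_k))$ since polarization preserves the minimal degree of a generator (indeed it preserves the multidegree-degree of every generator). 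Concatenating gives $\v(J(G)^{(k)}) = \reg(R/J(G)^{(k)}) = \alpha(J(G)^{(k)}) - 1$ for all $k \geq 1$.

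For the converse, assume the three quantities coincide for all $k \geq 1$; in particular take $k = 1$, so that $J(G)^{(1)} = J(G)$ and we get $\v(J(G)) = \reg(R/J(G)) = \alpha(J(G)) - 1$. By the characterization of the equality case in \cite[Theorem 3.8]{s23} (the same result that powers Theorem \ref{v-numver-less-regularity}), this forces $G$ to be a Cohen--Macaulay very well-covered graph. (One should note at the outset that $G$ may be assumed to have no isolated vertices, since isolated vertices do not affect $J(G)$, $G_k$, or any of the invariants in play, and the construction and Lemma \ref{very-well} are stated for such graphs.) This closes the equivalence.

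The main obstacle I anticipate is justifying that, for a Cohen--Macaulay very well-covered graph $H$, one actually has the full string of equalities $\v(J(H)) = \reg(R/J(H)) = \alpha(J(H)) - 1$ — i.e., extracting not just the inequality $\v(J(H)) \leq \reg(J(H)) - 1$ from \cite{s23} but the sharp equality together with its converse characterization. If \cite[Theorem 3.8]{s23} is only stated as an inequality, the equality case must be pinned down separately: one needs $\reg(R/J(H)) = \alpha(J(H)) - 1$, which for Cohen--Macaulay very well-covered $H$ follows because such cover ideals have linear resolutions (equivalently, $R/J(H)$ has a linear resolution after the appropriate shift), forcing $\reg(R/J(H)) = d(J(H)) - 1 = \alpha(J(H)) - 1$ by equigeneratedness; and then $\v(J(H)) \geq \alpha(J(H)) - 1$ always holds for square-free monomial ideals (cf. \cite[Proposition 2.2]{f23}), which combined with the \cite{s23} upper bound pins $\v(J(H))$ exactly. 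The converse direction — that equality of $\v$ and $\reg$ at level $k=1$ forces Cohen--Macaulay very well-coveredness — is the genuinely nontrivial input and I would cite it directly from \cite{s23}; everything else is bookkeeping with polarization.
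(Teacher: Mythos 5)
Your forward direction is correct and follows the paper's route (Lemma \ref{very-well} to get $G_k$ Cohen--Macaulay very well-covered, the equality case for such graphs from \cite[Corollary 3.9]{s23}, then transport along polarization via Lemma \ref{embedded-primes}, Lemma \ref{polarisation} and \cite[Corollary 1.6.3]{hh11}). The converse, however, has a genuine gap: you use the hypothesis only at $k=1$ and assert that $\v(J(G))=\reg(R/J(G))=\alpha(J(G))-1$ already forces $G$ to be Cohen--Macaulay \emph{very well-covered}. That characterization is false. By Terai's formula $\reg(J(G))=\operatorname{pd}(R/I(G))$, and since $\alpha(J(G))=\alpha_0(G)=n-\dim(R/I(G))$, the equality $\reg(R/J(G))=\alpha(J(G))-1$ is equivalent (via Auslander--Buchsbaum) to $\depth(R/I(G))=\dim(R/I(G))$, i.e.\ to Cohen--Macaulayness of $R/I(G)$ alone; this is exactly what \cite[Corollary 3.9]{s23} delivers, and it says nothing about very well-coveredness. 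A concrete counterexample to your $k=1$ claim is $G=K_3$: here $J(K_3)=(x_1x_2,x_1x_3,x_2x_3)$ has a linear resolution, so $\reg(R/J(K_3))=1$, one checks $J(K_3):x_1=(x_2,x_3)$ so $\v(J(K_3))=1=\alpha(J(K_3))-1$, yet $K_3$ is not very well-covered since $|V(K_3)|=3$ is odd.

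This is precisely why the paper's converse uses the hypothesis for \emph{every} $k\geq 1$: from the equalities at level $k$ it concludes via \cite[Corollary 3.9]{s23} that $R/I(G_k)$ is Cohen--Macaulay, hence by Eagon--Reiner \cite[Theorem 8.1.9]{hh11} that $J(G_k)=(J(G)^{(k)})^{\PP}$, and therefore $J(G)^{(k)}$ itself, has a linear resolution; only after knowing that $J(G)^{(k)}$ has a linear resolution for all $k\geq 1$ does \cite[Theorem 3.4]{f21} yield that $G$ is Cohen--Macaulay very well-covered. To repair your argument you must either invoke that all-$k$ linear-resolution criterion as the paper does, or supply some other input beyond the $k=1$ data.
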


\begin{proof}
Note that by Lemma \ref{polarisation}, we have $(J(G)^{(k)})^{\PP}=J(G_k)$. First, assume $G$ is a Cohen-Macaulay very well-covered graph. Then, by Lemma \ref{very-well}, $G_k$ is Cohen-Macaulay. Thus, it follows from \cite[Corollary 3.9.]{s23} that  $$\v(J(G_k))=\reg(J(G_k))-1=\alpha_{0}(G_k)-1.$$
Again, due to Lemma \ref{embedded-primes}, $\v(J(G)^{(k)})=\v(J(G)^{(k)})^{\PP})=\v(J(G_k)).$ Thus, by \cite[Corollary 1.6.3.]{hh11},
$$\v(J(G)^{(k)})=\v(J(G_k))=\reg(J(G_k))-1=\reg((J(G)^{(k)})^{\PP})-1=\reg(J(G)^{(k)})-1.$$
Therefore, we have $ \v(J(G)^{(k)})=\reg(R/J(G)^{(k)})= \alpha(J(G)^{(k)})-1$.\par 

Conversely, we assume that $\v(J(G)^{(k)})=\reg(R/J(G)^{(k)})=\alpha(J(G)^{(k)})-1$. Then, by Lemma \ref{polarisation}, $\v(J(G_k))=\reg(R/J(G_k))=\alpha_{0}(G_k)-1$. This implies $R/I(G_k)$ is Cohen-Macaulay by \cite[Corollary 3.9]{s23}. Thus, $J(G_k)=(J(G)^{(k)})^{\PP}$ has a linear resolution (see \cite[Theorem 8.1.9]{hh11}), which gives $J(G)^{(k)}$ has linear resolution by \cite[Corollary 1.6.3]{hh11}. This holds for any $k\geq 1$, and hence, $G$ is a Cohen-Macaulay very well-covered graph due to \cite[Theorem 3.4]{f21}.
\end{proof}



\begin{proposition}\label{second-alpha}
 Let $G$ be a simple graph. Then, 
 \begin{align*}
 \lim_{k \rightarrow \infty} \frac{ \v(J(G)^{(k)}) }{k}=\frac{\alpha(J(G)^{(2)})}{2}.   
 \end{align*}
\end{proposition}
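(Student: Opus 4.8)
The plan is to combine Corollary \ref{waldscmidt-constant} with the computation of the Waldschmidt constant of a cover ideal in terms of the fractional chromatic number, and to observe that the symbolic Rees algebra $\mathcal{R}_s(J(G))$ is Noetherian so that Corollary \ref{waldscmidt-constant} applies. Since $J(G) = \bigcap_{\{x_i,x_j\}\in E(G)}(x_i,x_j)$ is a square-free monomial ideal, its symbolic Rees algebra is Noetherian (indeed $\mathcal{SP}(J(G))$ is a rational polyhedron), so Corollary \ref{waldscmidt-constant} gives
\[
\lim_{k\to\infty}\frac{\v(J(G)^{(k)})}{k}=\hat{\alpha}(J(G)).
\]
Thus the statement reduces to the purely combinatorial identity $\hat{\alpha}(J(G))=\tfrac{1}{2}\alpha(J(G)^{(2)})$.

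To establish this identity I would first recall (from Bocci--Harbourne and the work on symbolic powers of monomial ideals, e.g. the references already cited around $\mathcal{SP}(I)$) that for the cover ideal $\hat{\alpha}(J(G))=\chi_f(G)/(\chi_f(G)-1)\cdot(\text{something})$ — more precisely, the well-known fact that $\hat{\alpha}(J(G))$ equals $n/(\text{fractional independence/covering number})$; concretely $\hat{\alpha}(J(G)) = \chi_f^*(G)$ where $\chi_f^*$ is the minimum of $|v|$ over the symbolic polyhedron, which by LP duality is a ratio of two integers $p/q$ attained at an integral point after scaling. The key sub-claim is then: if $\hat\alpha(J(G))=p/q$ in lowest terms, the linear (quasi-linear with trivial period) behaviour forces $\alpha(J(G)^{(k)})=\lceil p k/q\rceil$, and in particular the relevant "breakpoint" already appears at $k=2$, i.e. $\alpha(J(G)^{(2)})/2 = \hat\alpha(J(G))$. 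The cleanest route to this is the fact, special to cover ideals, that $\alpha(J(G)^{(k)})$ is a \emph{subadditive, superlinear-at-$k=2$} sequence: $\alpha(J(G)^{(2)}) \le k\cdot\hat\alpha(J(G))$ trivially, and conversely $\alpha(J(G)^{(2k)})\ge k\,\alpha(J(G)^{(2)})$ by superadditivity $\alpha(I^{(a)})+\alpha(I^{(b)})\le \alpha(I^{(a+b)})$ (wait — the correct direction: $\alpha(I^{(a+b)})\le\alpha(I^{(a)})+\alpha(I^{(b)})$), so $\alpha(J(G)^{(2k)})\le k\,\alpha(J(G)^{(2)})$, giving $\hat\alpha(J(G))=\lim_k \alpha(J(G)^{(2k)})/(2k)\le \alpha(J(G)^{(2)})/2$. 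Combined with the reverse inequality $\alpha(J(G)^{(2)})/2 \le \hat\alpha(J(G))$ — which holds because $\{\alpha(J(G)^{(k)})/k\}$ decreases to its limit (again by subadditivity, $\alpha(I^{(2m)})\le m\,\alpha(I^{(2)})$ so the even-indexed ratios are non-increasing, hence each is $\ge$ the limit) — we get equality.

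Assembling: by the subadditivity of $k\mapsto\alpha(J(G)^{(k)})$ the sequence $\alpha(J(G)^{(2m)})/(2m)$ is non-increasing in $m$, so $\alpha(J(G)^{(2)})/2\ge \lim_m \alpha(J(G)^{(2m)})/(2m)=\hat\alpha(J(G))$; and from $\alpha(J(G)^{(2m)})\le m\,\alpha(J(G)^{(2)})$ we get the limit is $\le \alpha(J(G)^{(2)})/2$. Hence $\hat\alpha(J(G))=\alpha(J(G)^{(2)})/2$, and plugging into Corollary \ref{waldscmidt-constant} finishes the proof.

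The main obstacle I anticipate is \emph{not} the limit-existence part (that is Corollary \ref{waldscmidt-constant}, already available) but rather justifying the claim that the Waldschmidt constant of a cover ideal is "witnessed at level $2$", i.e. $\hat\alpha(J(G))=\alpha(J(G)^{(2)})/2$. The subadditivity argument above does give it cleanly, but one must double-check the edge case where $G$ is bipartite (then $J(G)$ is normally torsion-free, $J(G)^{(k)}=J(G)^k$, $\hat\alpha=2$, and $\alpha(J(G)^{(2)})=4$, so it holds) versus $G$ containing odd cycles (where $\alpha(J(G)^{(2)})$ can be $3$ for an odd cycle $C_{2m+1}$: $\hat\alpha(J(C_{2m+1}))=(2m+1)/m$, and indeed $\alpha(J(C_{2m+1})^{(2)})= ?$ — here one must verify the level-$2$ value matches, which for $C_3$ gives $\hat\alpha = 3/1 = 3$ and $\alpha(J(C_3)^{(2)}) = 3$, so $3/2 \ne 3$!). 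This warns me that the naive "level $2$" claim can fail, and the real content of the proposition must be that for cover ideals the \emph{period} of the quasi-linear $\v$-function, or at least the relevant structure of $\alpha(J(G)^{(k)})$, is controlled so that the limit equals the level-$2$ average specifically — likely using that $\alpha(J(G)^{(k)})=\lceil k\cdot\hat\alpha(J(G))\rceil$ together with a parity/ceiling identity $\lceil 2t\rceil = 2\lceil t\rceil$ only when $t$ is a half-integer. I would therefore reexamine the precise statement (the author may intend $\hat\alpha(J(G))$ to always be a half-integer, which is true: $\hat\alpha(J(G))=n/\tau^*(G)$ has the form $a/2$ by LP duality on the perfect-matching polytope) — granting that $2\hat\alpha(J(G))\in\mathbb Z$, we get $\alpha(J(G)^{(2)})=2\hat\alpha(J(G))$ directly since $\alpha(J(G)^{(2)})=\lceil 2\hat\alpha(J(G))\rceil=2\hat\alpha(J(G))$, and the proposition follows. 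Verifying $2\hat\alpha(J(G))\in\mathbb Z$ is then the crux, and that is where I would spend the effort.
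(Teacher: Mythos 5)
Your overall route is the same as the paper's: apply Corollary \ref{waldscmidt-constant} (the symbolic Rees algebra of a monomial ideal is Noetherian, so the limit exists and equals $\hat{\alpha}(J(G))$), and then invoke the combinatorial identity $\hat{\alpha}(J(G))=\alpha(J(G)^{(2)})/2$. The paper simply cites \cite[Corollary 4.4]{dg20} for that identity, whereas you try to re-derive it, and your final argument --- half-integrality of the fractional vertex cover polytope, so that $2\hat{\alpha}(J(G))\in\mathbb{Z}$ and doubling an optimal half-integral vertex produces a monomial of degree $2\hat{\alpha}(J(G))$ in $J(G)^{(2)}$ --- is exactly the right mechanism and is essentially how the cited result is proved.

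Two corrections to your intermediate reasoning. First, the ``both directions from subadditivity'' passage is not valid: Fekete's lemma gives $\hat{\alpha}(I)=\inf_k \alpha(I^{(k)})/k\leq \alpha(I^{(2)})/2$, but monotonicity of the even-indexed ratios only says each term is $\geq$ the \emph{infimum over even indices}, not that the infimum is attained at $k=2$; if that argument worked it would prove $\hat{\alpha}(I)=\alpha(I^{(2)})/2$ for \emph{every} ideal, which is false (e.g.\ for ideals of general points in $\mathbb{P}^2$). You correctly abandon this, but the write-up should excise it. Second, your sanity check is miscomputed: for the \emph{cover} ideal of an odd cycle, $\hat{\alpha}(J(C_{2m+1}))=\tau^*(C_{2m+1})=(2m+1)/2$ (you quoted the value for the edge ideal), and for $C_3$ one has $\alpha(J(C_3)^{(2)})=3=2\cdot\tfrac{3}{2}$, so the identity holds there --- it is not a counterexample. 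With the argument trimmed to: (i) Corollary \ref{waldscmidt-constant}, and (ii) $\alpha(J(G)^{(2)})=2\hat{\alpha}(J(G))$ via half-integrality (the inequality $\alpha(J(G)^{(2)})\geq 2\hat{\alpha}(J(G))$ being automatic since any integral solution of the level-$2$ covering program halves to an LP-feasible point), your proof is correct and matches the content of \cite[Corollary 4.4]{dg20}.
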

\begin{proof}
The proof follows from Theorem \ref{limit-vnumber} and \cite[Corollary 4.4]{dg20}. 
\end{proof}

\begin{corollary}\label{minimum-vertex-cover}
Let $G$ be a graph such that for every odd cycle $C$ of $G$ and for every $i\in V(G)$, there exists a vertex $j$ of $C$ with $\{i, j\} \in E(G)$. Then, 
\begin{align*}
 \lim_{k \rightarrow \infty} \frac{ \v(J(G)^{(k)}) }{k}= \min \bigg\{\alpha(J(G)),\frac{ \mid V(G) \mid}{2} \bigg\}.  
 \end{align*} 
\end{corollary}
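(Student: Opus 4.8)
The plan is to combine Proposition \ref{second-alpha} with an explicit computation of $\alpha(J(G)^{(2)})$ under the stated hypothesis on $G$. By Proposition \ref{second-alpha}, we already know that $\displaystyle\lim_{k\to\infty}\frac{\v(J(G)^{(k)})}{k}=\frac{\alpha(J(G)^{(2)})}{2}$, so the entire task reduces to showing that under the hypothesis on odd cycles, $\alpha(J(G)^{(2)})=\min\{2\alpha(J(G)),\, |V(G)|\}$, which then gives the claimed formula after dividing by $2$.

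First I would recall the combinatorial interpretation of $\alpha$ for symbolic powers of cover ideals: since $J(G)^{(k)}=\bigcap_{\{x_i,x_j\}\in E(G)}(x_i,x_j)^k$, a monomial $x^a$ lies in $J(G)^{(k)}$ if and only if $a_i+a_j\geq k$ for every edge $\{x_i,x_j\}$, so $\alpha(J(G)^{(k)})$ is the minimum of $|a|=\sum a_i$ over all such nonnegative integer vectors $a$; this is the ``$k$-fold fractional cover'' optimum. For $k=1$ this is just $\alpha(J(G))=$ the vertex cover number $\tau(G)$, and in general $\alpha(J(G)^{(k)})/k$ decreases to the fractional cover number $\tau^*(G)$, which by LP duality equals the fractional matching number and, for any graph, lies between $|V(G)|/2$ (achieved by the all-$\tfrac12$ vector when $G$ has no isolated vertices) and $\tau(G)$. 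The key point is that the hypothesis — every vertex is adjacent to some vertex of every odd cycle — is a known sufficient condition (this appears in work on fractional covers / the result \cite[Corollary 4.4]{dg20} already invoked) forcing $\alpha(J(G)^{(2)})$ to equal exactly $\min\{2\tau(G),|V(G)|\}$: either the all-$\tfrac12$ cover scaled to level $2$ (i.e. the all-ones vector, of weight $|V(G)|$) is optimal, or an integral cover doubled is optimal. I would verify the two inequalities: $\alpha(J(G)^{(2)})\leq 2\alpha(J(G))$ is immediate since doubling a minimum vertex cover gives a level-$2$ cover, and $\alpha(J(G)^{(2)})\leq |V(G)|$ follows because the all-ones vector is a level-$2$ cover whenever $G$ has no isolated vertices (isolated vertices contribute $0$ to $J(G)$ and can be discarded). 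For the reverse inequality $\alpha(J(G)^{(2)})\geq \min\{2\tau(G),|V(G)|\}$ I would take an optimal level-$2$ vector $a$, and use the odd-cycle hypothesis to show that one may assume $a$ has all entries in $\{0,1,2\}$ with the ``$1$''-entries supported on an induced subgraph with no odd cycles (a bipartite graph), on which König's theorem turns the fractional bound into an integral one, thereby recovering a genuine vertex cover of appropriate size after rounding.

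The main obstacle I anticipate is precisely this rounding / structural step: showing that the hypothesis on odd cycles lets one reduce an arbitrary optimal level-$2$ fractional-type cover to one of the two extremal shapes. The cleanest route is probably to cite \cite[Corollary 4.4]{dg20} (or the surrounding results of that paper) in a slightly stronger form, since that reference is exactly what powers Proposition \ref{second-alpha} and presumably already identifies $\widehat{\alpha}(J(G))$ — equivalently $\tau^*(G)$ — as $\min\{\tau(G),|V(G)|/2\}$ under this very hypothesis; then $\alpha(J(G)^{(2)})=2\widehat{\alpha}(J(G))$ follows because for cover ideals of such graphs the Waldschmidt constant is attained already at the second symbolic power (again from \cite{dg20}), and the corollary is immediate. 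If that citation does not give the statement in the form needed, the fallback is the self-contained LP-duality plus König argument sketched above.
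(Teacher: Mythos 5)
Your reduction is exactly the paper's: invoke Proposition \ref{second-alpha} and then compute $\alpha(J(G)^{(2)})$. Where you diverge is in how that computation is done. The paper disposes of it in one line by citing \cite[Proposition 5.3]{hht07}, which states that under precisely this odd-cycle hypothesis one has $J(G)^{(2)}=J(G)^2+(x_1\cdots x_n)$, whence $\alpha(J(G)^{(2)})=\min\{2\alpha(J(G)),|V(G)|\}$ immediately. Your first suggested route (leaning harder on \cite[Corollary 4.4]{dg20}) is not quite right --- that reference only supplies the equality $\lim_k \alpha(J(G)^{(k)})/k=\alpha(J(G)^{(2)})/2$ already used in Proposition \ref{second-alpha}, and does not identify the value of $\alpha(J(G)^{(2)})$ under the odd-cycle hypothesis --- but your self-contained fallback is sound and completable: cap an optimal exponent vector at $2$, set $V_t=\{i: a_i=t\}$, observe that every neighbour of a vertex in $V_0$ lies in $V_2$; if $V_0=\emptyset$ the weight is at least $|V(G)|$, while if $V_0\neq\emptyset$ the hypothesis forces every odd cycle to meet $V_2$, so $G-V_2$ is bipartite with all its edges inside $V_1$, and K\"onig gives $\tau(G)\leq |V_2|+|V_1|/2$, i.e.\ the weight is at least $2\tau(G)=2\alpha(J(G))$. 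That argument is in effect a direct combinatorial proof of the degree-level consequence of \cite[Proposition 5.3]{hht07}; it is longer but self-contained and makes transparent exactly where the odd-cycle hypothesis enters, whereas the paper's citation buys brevity at the cost of opacity. Either version is acceptable, but you should commit to the explicit argument (with the $V_0=\emptyset$ versus $V_0\neq\emptyset$ case split stated cleanly) rather than hoping the \cite{dg20} citation can be strengthened.
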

\begin{proof}
 By \cite[Proposition 5.3]{hht07}, we have $J(G)^{(2)}=J(G)^2+(x_1 \cdots x_n)$. This implies that 
 $$\frac{\alpha(J(G)^{(2)})}{2}= \min \bigg\{\alpha(J(G)),\frac{\mid V(G) \mid}{2} \bigg\}.$$
 Thus, by Proposition \ref{second-alpha}, we get the required result. 
\end{proof}

\begin{remark}{\rm
The complete graphs $K_n$, cyclic graphs $C_n$, and a graph $G$ obtained from $C_n$ by adding leaves to any vertices of $C_n$, all belong to the class of graphs described in Corollary \ref{minimum-vertex-cover}. In fact, we have $\displaystyle \lim_{k \rightarrow \infty} \frac{ \v(J(G)^{(k)}) }{k}=\frac{ \lvert V(G) \rvert}{2}$, where $G$ is a complete graph or a cyclic graph $C_n$, or a graph obtained from $C_n$ by adding leaves to any vertices, provided that the total number of leaves in $G$ is at most $\frac{ \lvert V(G) \rvert}{2}$.
}
\end{remark}

\noindent 
{\bf Acknowledgement:} 
Manohar Kumar is thankful to the Government of India for supporting him in this work through the Prime Minister Research Fellowship. Kamalesh Saha would like to thank the National Board for Higher Mathematics (India) for the financial support through the NBHM Postdoctoral Fellowship. Again, Kamalesh Saha is partially supported by an Infosys Foundation fellowship.

\end{document}